\newtheorem{theorem}{Theorem}[section]
\theoremstyle{plain}
\newtheorem{corollary}{Corollary}[section]
\newtheorem{lemma}{Lemma}[section]
\newtheorem{problem}{Problem}
\newtheorem{proposition}{Proposition}[section]
\newtheorem{observation}{Observation}
\numberwithin{equation}{section}
\begin{document}
\title[Ramsey theory and strength of graphs]{Ramsey theory and strength of
graphs}
\author{Rikio Ichishima}
\address{Department of Sport and Physical Education, Faculty of Physical
Education, Kokushikan University, 7-3-1 Nagayama, Tama-shi, Tokyo 206-8515,
Japan}
\email{ichishim@kokushikan.ac.jp}
\author{Francesc A Muntaner-Batle}
\address{Graph Theory and Applications Research Group, School of Electrical
Engineering and Computer Science, Faculty of Engineering and Built
Environment, The University of Newcastle, NSW 2308 Australia }
\email{famb1es@yahoo.es}
\author{Yukio Takahashi}
\address{Department of Science and Engineering, Faculty of Electronics and
Informations, Kokushikan University, 4-28-1 Setagaya, Setagaya-ku, Tokyo
154-8515, Japan}
\email{takayu@kokushikan.ac.jp}

\begin{abstract}
A numbering $f$ of a graph $G$ of order $n$ is a labeling that assigns
distinct elements of the set $\left\{ 1,2,\ldots ,n\right\} $ to the
vertices of $G$, where each $uv\in E\left( G\right) $ is labeled $f\left(
u\right) +f\left( v\right) $. The strength $\mathrm{str}\left( G\right) $ of 
$G$ is defined by 
$\mathrm{str}\left( G\right) =\min \left\{ \mathrm{str}_{f}\left( G\right)
\left\vert f\text{ is a numbering of }G\right. \right\}$,
where $\mathrm{str}_{f}\left( G\right) =\max \left\{ f\left( u\right)
+f\left( v\right) \left\vert uv\in E\left( G\right) \right. \right\} $. 
Let $f\left( n\right) $ denote the maximum of $\mathrm{str}\left( G\right) +%
\mathrm{str}\left( \overline{G}\right) $ over nonempty graphs $G$ and $%
\overline{G}$ of order $n$, where $\overline{G}$ represents the
complement of $G$. 
In this paper, we establish lower bounds for the Ramsey numbers related to the concept of strength of a graph and show a sharp lower bound for $f\left( n\right) $. 
In addition to these results, we provide another lower bound for $f\left( n\right) $ and determine some exact values for $f\left(n\right)$.
Furthermore, we extend existing necessary and sufficient conditions involving the strength of a graph. Finally, we investigate bounds for 
$\mathrm{str}\left( G\right) +\mathrm{str}\left( \overline{G}\right) $ whenever $G$ and $\overline{G}$ are nonempty graphs of order $n$. The resulting upper bound is shown to be related to Ramsey numbers. Throughout this paper, we propose some open problems arising from our study.
\end{abstract}

\date{Sept 12, 2024}
\subjclass{05C35, 05C55, 05C78, 05D10}
\keywords{strength, Ramsey theory, extremal graph
theory, Nordhaus–Gaddum inequality, graph labeling}
\dedicatory{In memory of Susana Clara L\'{o}pez Masip}
\maketitle

\section{Introduction}
We refer to the book by Chartrand and Lesniak \cite{CL} for
graph-theoretical notation and terminology not described in this paper. 
The \emph{vertex set} of a graph $G$ is denoted by $V \left(G\right)$, 
while the \emph{edge set} of $G$ is denoted by $E\left (G\right)$. 
The cardinality of the vertex set of a graph $G$ is called the \emph{order} of $G$, while the cardinality of the
edge set is the \emph{size} of $G$.
The \emph{complete graph} and \emph{path} of order $n$ are denoted by $K_{n}$ and $P_{n}$, respectively. The \emph{complete bipartite graph} of order $m+n$ and size $mn$ is denoted by $K_{m,n}$. 

If $u$ and $v$ are not adjacent in $G$, 
the \emph{addition of edge} $uv$ results in the smallest supergraph of $G$ containing the edge $uv$ and is denoted by $G+uv$.
For two graphs with disjoint vertex sets, the \emph{union} $G = G_{1}\cup G_{2}$ has $V\left(G\right)=V\left(G_{1}\right) \cup V\left(G_{2}\right)$ 
and $E\left(G\right)=E\left(G_{1}\right) \cup E\left(G_{2}\right)$. 
If a graph $G$ consists of $k$ ($k \geq 2$) disjoint copies of a graph $H$, then we write $G=kH$. 

We will use the notation $\left[a, b\right] $ for the interval of integers $x $ such that $a\leq x\leq b$. For a graph $G$ of order $n$, a \emph{%
numbering} $f$ of $G$ is a labeling that assigns distinct elements of the
set $\left[ 1,n\right] $ to the vertices of $G$, where each $uv\in E\left(
G\right) $ is labeled $f\left( u\right) +f\left( v\right) $. The \emph{%
strength} \textrm{str}$_{f}\left( G\right) $ \emph{of a numbering} $%
f:V\left( G\right) \rightarrow \left[ 1,n\right] $ of $G$ is defined by
\begin{equation*}
\mathrm{str}_{f}\left( G\right) =\max \left\{ f\left( u\right) +f\left(
v\right) \left| uv\in E\left( G\right) \right. \right\}\text{,}
\end{equation*}%
that is, $\mathrm{str}_{f}\left( G\right) $ is the maximum edge label of $G$
and the \emph{strength\ }\textrm{str}$\left( G\right) $ of a graph $G$
itself is 
\begin{equation*}
\mathrm{str}\left( G\right) =\min \left\{ \mathrm{str}_{f}\left( G\right)
\left| f\text{ is a numbering of }G\right. \right\}\text{.}
\end{equation*}
Since empty graphs do not have edges, this definition does not apply to such
graphs. Consequently, we only consider nonempty graphs in our study of the strength. This concept was proposed by Gary Chartrand \cite{Chartrand} and was studied in \cite{IMO1} as a generalization of the problem of finding whether a graph is super edge-magic or not (see \cite{ELNR} for the definition of a super edge-magic graph and also consult either \cite{AH} or \cite{FIM} for alternative and often more useful definitions of the same concept). A necessary and sufficient condition for a graph to be super edge-magic established in \cite{FIM} gives rise to the concept of consecutive strength labeling of a graph (see \cite{IMO1} for the definition of a consecutive strength labeling of a graph), which is equivalent to super edge-magic labeling. It is important to notice that
the term “strength” was introduced by Cunningham \cite{Cunningham} for a different concept in 1985.

Other related parameters have been studied in the area of
graph labeling. Excellent sources for more information on this topic are
found in the dynamic survey by Gallian \cite{Gallian}, which also includes
information on other kinds of graph labeling problems and their applications.

A graph $G$ has no isolated vertices if and only if $\delta \left( G\right)
\geq 1$, where $\delta \left( G\right) $ denotes the minimum degree of $G$.
This gives us the following observation.

\begin{observation}
\label{observation} If $G$ is a graph with $\delta \left( G\right) \geq 1$,
then $\mathrm{str}\left( G\cup nK_{1}\right) =\mathrm{str}\left( G\right) $
for every positive integer $n$.
\end{observation}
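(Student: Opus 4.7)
The plan is to prove the two inequalities $\mathrm{str}(G\cup nK_1)\le \mathrm{str}(G)$ and $\mathrm{str}(G)\le \mathrm{str}(G\cup nK_1)$ separately. Throughout, let $p=|V(G)|$, so that $G\cup nK_1$ has order $p+n$ and a numbering draws labels from $[1,p+n]$, whereas a numbering of $G$ draws from $[1,p]$. The hypothesis $\delta(G)\ge 1$ is invoked only to guarantee that $G$ (and hence $G\cup nK_1$) has at least one edge, so that the strength is defined on both sides.

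For the upper bound, I would start from an optimal numbering $g:V(G)\to[1,p]$ realizing $\mathrm{str}(G)$ and extend it to a numbering $f$ of $G\cup nK_1$ by assigning the unused labels $p+1,p+2,\ldots,p+n$ bijectively to the $n$ isolated vertices. Since every edge of $G\cup nK_1$ lies in $G$ and has the same pair of endpoint labels under $f$ as under $g$, we obtain $\mathrm{str}_f(G\cup nK_1)=\mathrm{str}_g(G)=\mathrm{str}(G)$, and the upper bound follows.

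For the lower bound, I would take an arbitrary numbering $f:V(G\cup nK_1)\to[1,p+n]$ and compress it: list the labels that $f$ assigns to the vertices of $G$ in increasing order as $s_1<s_2<\cdots<s_p$, and define $g:V(G)\to[1,p]$ by $g(v)=i$ whenever $f(v)=s_i$. Then $g$ is a numbering of $G$, and since $s_1\ge 1$ forces inductively $s_i\ge i$, we have $g(v)\le f(v)$ for every $v\in V(G)$. Consequently, for each edge $uv\in E(G)=E(G\cup nK_1)$,
\begin{equation*}
g(u)+g(v)\le f(u)+f(v)\le \mathrm{str}_f(G\cup nK_1),
\end{equation*}
so $\mathrm{str}(G)\le \mathrm{str}_g(G)\le \mathrm{str}_f(G\cup nK_1)$. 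Taking the minimum over all $f$ yields $\mathrm{str}(G)\le \mathrm{str}(G\cup nK_1)$.

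There is no real obstacle here; the only subtle point is the order-preserving compression argument and the verification that $s_i\ge i$, which is what makes the edge labels weakly decrease. Combining the two inequalities gives $\mathrm{str}(G\cup nK_1)=\mathrm{str}(G)$.
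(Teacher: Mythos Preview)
Your argument is correct. The paper, however, does not supply a proof of this statement at all: it is recorded as an ``observation'' immediately after remarking that $\delta(G)\ge 1$ is equivalent to $G$ having no isolated vertices, and is treated as self-evident. Your write-up therefore goes beyond what the paper does by spelling out both directions; the extension of an optimal numbering of $G$ by placing the large labels on the isolated vertices, and the order-preserving compression (with the inequality $s_i\ge i$) for the reverse inequality, are exactly the natural arguments one would expect and are fully rigorous.
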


In light of Observation \ref{observation}, we only need to be concerned about determining the strength of graphs $G$ with $\delta \left( G\right) \geq 1$. 
For such graphs $G$, the following lower bound for $\mathrm{str}\left( G\right) $ was
found in \cite{IMO1}.

\begin{lemma}
\label{lemma_trivial}For every graph $G$ of order $n$ with $\delta \left(
G\right) \geq 1$,%
\begin{equation*}
\mathrm{str}\left( G\right) \geq n+\delta \left( G\right)\text{.}
\end{equation*}
\end{lemma}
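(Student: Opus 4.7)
The plan is to exhibit, for an arbitrary numbering $f$ of $G$, an edge whose label is at least $n+\delta(G)$; minimizing over $f$ then gives the claimed bound on $\mathrm{str}(G)$.

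First I would focus on the vertex that carries the largest possible label. Let $f:V(G)\to [1,n]$ be any numbering, and let $v\in V(G)$ be the (unique) vertex with $f(v)=n$. Because $\delta(G)\geq 1$, the vertex $v$ has at least $\delta(G)$ neighbors in $G$, whose $f$-values are distinct elements of $[1,n-1]$.

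The key combinatorial observation is then that any $\delta(G)$ distinct elements of the positive integers must contain one of value at least $\delta(G)$. Applying this to the labels of the neighbors of $v$, there exists a neighbor $u$ of $v$ with $f(u)\geq \delta(G)$. The edge $uv$ is therefore labeled
\begin{equation*}
f(u)+f(v)\geq \delta(G)+n,
\end{equation*}
so $\mathrm{str}_f(G)\geq n+\delta(G)$. Since $f$ was arbitrary, taking the minimum over all numberings yields $\mathrm{str}(G)\geq n+\delta(G)$.

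There is no real obstacle here; the only thing to notice is that one should focus on the vertex labeled $n$ rather than, say, a vertex of minimum degree, because the bound is obtained by forcing the largest label to participate in some edge and then arguing that its partner cannot be too small. The hypothesis $\delta(G)\geq 1$ is used precisely to ensure that $v$ has at least one neighbor to pair with.
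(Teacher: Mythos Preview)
Your argument is correct. The paper itself does not prove this lemma; it merely quotes the bound from \cite{IMO1} (Ichishima, Muntaner-Batle, and Oshima, 2018). Your proof is the standard one and is essentially the argument given in that reference: look at the vertex labeled $n$, use that it has at least $\delta(G)$ neighbors, and observe that among $\delta(G)$ distinct labels in $[1,n-1]$ one must be at least $\delta(G)$. One small remark: in your closing comment you say the hypothesis $\delta(G)\geq 1$ is used to guarantee that $v$ has a neighbor, but in fact $v$ has at least $\delta(G)$ neighbors simply by the definition of $\delta$; the hypothesis $\delta(G)\geq 1$ is really there so that $G$ is nonempty and $\mathrm{str}(G)$ is defined at all.
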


The lower bound given in Lemma \ref%
{lemma_trivial} is sharp in the sense that there are infinitely many graphs $%
G$ for which $\mathrm{str}\left( G\right) =\left| V\left( G\right) \right|
+\delta \left( G\right) $ (see \cite{GLS, IMO1, IMO2, IMOT, IMT2} for a detailed list
of graphs that meet this bound). 
Several other bounds for the strength of a graph have been found in terms of other parameters defined on graphs (see \cite{GLS, IMO1, IMO4, IMOT, IMT, IMT2}). 

For every nonempty graph $G$ of order $n$, it is clear that $3 \leq \mathrm{str}\left( G\right) \leq 2n-1$. 
It is also true that for every $k\in \left[ 1,n-1\right] $, 
there exists a graph $G$ of order $n$ satisfying $\delta \left( G\right) =k$ and 
$\mathrm{str}\left( G\right) =n+k$ (see \cite{IMO3}).

In the process of settling the problem (proposed in \cite{IMO1}) of finding
sufficient conditions for a graph $G$ of order $n$ with $\delta \left(
G\right) \geq 1$ to ensure that $\mathrm{str}\left( G\right) =n+\delta
\left( G\right) $, the following class of graphs was defined in \cite{IMO4}.
For integers $k\geq 2$, let $F_{k}$ be the graph with $V\left( F_{k}\right)
=\left\{ v_{i}\left\vert i\in \left[ 1,k\right] \right. \right\} $ and 
\begin{equation*}
E\left( F_{k}\right) =\left\{ v_{i}v_{j}\left\vert i\in \left[
1,\left\lfloor k/2\right\rfloor \right] \text{ and }j\in \left[ 1+i,k+1-i%
\right] \right. \right\}\text{.}
\end{equation*}
Hence, we have $F_{2}=K_{2}$, $F_{3}=P_{3}$, and $F_{4}=K_{1,3}+e$.

Let $\overline{G}$ denote the complement of a graph $G$. The
following result found in \cite{IMO4} provides a necessary and sufficient
condition for a graph $G$ of order $n$ to hold the inequality $\mathrm{str}%
\left( G\right) \leq 2n-k$, where $k\in \left[ 2,n-1\right] $.

\begin{theorem}
\label{main1} Let $G$ be a graph of order $n$. Then $\mathrm{str}\left(
G\right) \leq 2n-k$ if and only if $\overline{G}$ contains $F_{k}$ as a
subgraph, where $k\in \left[ 2,n-1\right] $.
\end{theorem}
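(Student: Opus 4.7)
The plan is to exploit a clean dictionary between pairs of labels with large sum and the edge set of $F_k$. First I would rewrite the defining formula for $E(F_k)$ in a more symmetric form: a direct check shows that, for $1\le i<j\le k$, the pair $v_iv_j$ is an edge of $F_k$ if and only if $i+j\le k+1$. Under the substitution $\ell_i=n+1-i$, which sends $[1,k]$ bijectively onto the top $k$ labels $\{n-k+1,\ldots,n\}$, this condition is equivalent to $\ell_i+\ell_j\ge 2n-k+1$. Consequently, a copy of $F_k$ is nothing but the ``forbidden-pair graph'' for a numbering aiming at $\mathrm{str}_f(G)\le 2n-k$.

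With this dictionary in hand, the sufficiency direction ($\overline{G}\supseteq F_k\Rightarrow \mathrm{str}(G)\le 2n-k$) is almost immediate. Given a copy of $F_k$ in $\overline{G}$ on vertices $u_1,\ldots,u_k$, I would define a numbering $f$ by $f(u_i)=n+1-i$ and extend it by assigning the labels $1,\ldots,n-k$ arbitrarily to the remaining $n-k$ vertices. A short case analysis on an arbitrary edge $xy\in E(G)$---either both endpoints lie among the $u_i$, in which case the $F_k$-hypothesis forces $i+j\ge k+2$, or at least one endpoint carries a label at most $n-k$---then yields $f(x)+f(y)\le 2n-k$.

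For the necessity direction I would start from any numbering $f$ of $G$ attaining $\mathrm{str}_f(G)\le 2n-k$ and, for each $i\in[1,n]$, let $u_i$ denote the vertex carrying label $n+1-i$ (which is well defined for $i\le k$ since $k\le n-1<n$). For every pair with $i<j$ and $i+j\le k+1$ the computation $f(u_i)+f(u_j)=2n+2-(i+j)\ge 2n-k+1$ forces $u_iu_j\notin E(G)$, hence $u_iu_j\in E(\overline{G})$; the map $v_i\mapsto u_i$ then embeds $F_k$ into $\overline{G}$.

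The only genuine obstacle is verifying that the asymmetric-looking index condition ``$i\in[1,\lfloor k/2\rfloor]$ and $j\in[1+i,k+1-i]$'' in the definition of $F_k$ coincides with the clean symmetric condition $i<j$ and $i+j\le k+1$; once that identification is in place, both implications reduce to the short computations above, and the role of the hypothesis $k\le n-1$ is merely to guarantee that the $k$ distinguished vertices $u_1,\ldots,u_k$ exist inside a graph of order $n$.
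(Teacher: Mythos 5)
Your proposal is correct and complete. Note that the paper does not actually prove Theorem \ref{main1}; it imports it from the earlier work \cite{IMO4}, so there is no in-paper argument to compare against. Your key identification --- that $v_iv_j\in E(F_k)$ iff $i<j$ and $i+j\le k+1$, which under $\ell_i=n+1-i$ becomes $\ell_i+\ell_j\ge 2n-k+1$ --- is verified correctly (the containment $i\le\lfloor k/2\rfloor$ follows from $2i<i+j\le k+1$), and both directions then reduce to the two-line computations you give; this is surely the intended argument behind the cited result.
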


The following theorem that is the contrapositive of Theorem \ref{main1} provides a necessary and sufficient condition
for a graph $G$ of order $n$ to hold the inequality $\mathrm{str}\left(
G\right) \geq 2n-k+1$, where $k\in \left[ 2,n-1\right] $.

\begin{theorem}
\label{main2}Let $G$ be a graph of order $n$. Then $%
\mathrm{str}\left( G\right) \geq 2n-k+1$ if and only if $\overline{G}$ does
not contain $F_{k}$ as a subgraph, where $k\in \left[ 2,n-1\right] $.
\end{theorem}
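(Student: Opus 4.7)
The plan is essentially trivial: Theorem \ref{main2} is explicitly stated as the contrapositive of Theorem \ref{main1}, so I would derive it by negating both sides of the biconditional in Theorem \ref{main1}. The only thing worth checking is that the inequalities translate correctly.

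First I would observe that $\mathrm{str}(G)$ is an integer, since by definition it equals $f(u)+f(v)$ for some numbering $f:V(G)\to[1,n]$ and some edge $uv$, and such a sum of two elements of $[1,n]$ is a positive integer. Therefore the negation of the statement $\mathrm{str}(G)\leq 2n-k$ is precisely $\mathrm{str}(G)\geq 2n-k+1$; there is no gap coming from strict versus non-strict inequality.

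Next I would write out Theorem \ref{main1} as a biconditional $P\iff Q$, where $P$ is $\mathrm{str}(G)\leq 2n-k$ and $Q$ is the statement that $\overline{G}$ contains $F_k$ as a subgraph. Taking the negation of both sides gives $\neg P\iff \neg Q$. By the observation in the previous paragraph, $\neg P$ is exactly $\mathrm{str}(G)\geq 2n-k+1$, while $\neg Q$ is exactly the statement that $\overline{G}$ does not contain $F_k$ as a subgraph. This yields Theorem \ref{main2} directly, for the same range $k\in[2,n-1]$.

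There is no genuine obstacle here: the whole content of Theorem \ref{main2} is already carried by Theorem \ref{main1}, and the argument is purely logical. The only minor care point, as noted above, is the integrality of $\mathrm{str}(G)$, which legitimises replacing the negation $\mathrm{str}(G)>2n-k$ by $\mathrm{str}(G)\geq 2n-k+1$.
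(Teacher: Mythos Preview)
Your proposal is correct and matches the paper's approach exactly: the paper simply states that Theorem \ref{main2} is the contrapositive of Theorem \ref{main1} and gives no further proof. Your added remark about the integrality of $\mathrm{str}(G)$ justifying the passage from $>2n-k$ to $\geq 2n-k+1$ is a nice clarification that the paper leaves implicit.
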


The preceding two results play an important role in the study of the strength of graphs (see \cite{IMT, IOT, IOT2}).

The following theorem is a consequence of Lemma \ref{lemma_trivial} and Theorem %
\ref{main1}.

\begin{theorem}
\label{main3}Let $G$ be a graph of order $n$ with $\delta \left( G\right)
=n-k$, where $k\in \left[ 2,n-1\right] $. Then $\mathrm{str}\left( G\right)
=2n-k$ if and only if $\overline{G}$ contains $F_{k}$ as a subgraph.
\end{theorem}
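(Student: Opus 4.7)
The plan is to observe that Theorem \ref{main3} is essentially an immediate consequence of combining Lemma \ref{lemma_trivial} with Theorem \ref{main1}, so the task reduces to verifying that the hypothesis $\delta(G)=n-k$ with $k\in[2,n-1]$ puts us into the setting where both results apply, and then chaining the two bounds.

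First I would note that since $k\in[2,n-1]$, the minimum degree $\delta(G)=n-k$ lies in $[1,n-2]$, and in particular $\delta(G)\geq 1$, so Lemma \ref{lemma_trivial} is applicable and gives
\begin{equation*}
\mathrm{str}(G)\geq n+\delta(G)=n+(n-k)=2n-k.
\end{equation*}
This lower bound holds without any assumption on $\overline{G}$ and will be used in both directions.

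For the forward direction, I would assume $\mathrm{str}(G)=2n-k$; then in particular $\mathrm{str}(G)\leq 2n-k$, so Theorem \ref{main1} (applied with the same $k\in[2,n-1]$) immediately yields that $\overline{G}$ contains $F_k$ as a subgraph. For the reverse direction, assuming $\overline{G}$ contains $F_k$, Theorem \ref{main1} gives $\mathrm{str}(G)\leq 2n-k$, which combined with the lower bound $\mathrm{str}(G)\geq 2n-k$ established above forces equality $\mathrm{str}(G)=2n-k$.

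There is no real obstacle here: the only thing to be careful about is confirming that the hypothesis $\delta(G)=n-k$ is precisely what makes the trivial lower bound of Lemma \ref{lemma_trivial} match the upper bound of Theorem \ref{main1}, which is exactly the reason the theorem is a ``consequence'' of the two preceding results as the authors indicate. The proof therefore amounts to one short chain of inequalities in each direction, and I would present it in only a few lines.
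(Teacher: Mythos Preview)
Your proposal is correct and matches exactly the approach the paper indicates: it states Theorem~\ref{main3} simply as ``a consequence of Lemma~\ref{lemma_trivial} and Theorem~\ref{main1}'' without writing out a separate proof, and your argument spells out precisely that consequence. The only detail you added beyond what the paper makes explicit is the check that $\delta(G)=n-k\geq 1$ so that Lemma~\ref{lemma_trivial} applies, which is a fair point to include.
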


For positive integers $s$ and $t$, the (classical) \emph{Ramsey number} $%
r\left( s,t\right) $ is the least positive integer $n$ such that for every
graph $G$ of order $n$, either $G$ contains $K_{s}$ as a subgraph or $%
\overline{G}$ contains $K_{t}$ as a subgraph, that is, $G$ contains either $%
s $ mutually adjacent vertices or $t$ mutually nonadjacent vertices.

The subject of Ramsey numbers has expanded greatly and in many directions
during the past four decades, in many types of \textquotedblleft Ramsey
numbers\textquotedblright (see the paper by Chartrand and Zhang \cite{CP}). In this paper, we consider the following
generalization. For two graphs $G_{1}$ and $G_{2}$, the (generalized) \emph{%
Ramsey number} $r\left( G_{1},G_{2}\right) $ is the least positive integer $%
n $ such that for every graph $G$ of order $n$, either $G$ contains $G_{1}$
as a subgraph or $\overline{G}$ contains $G_{2}$ as a subgraph. Hence, $%
r\left( K_{s},K_{t}\right) =r\left( s,t\right) $. Since $\overline{\overline{%
G}}=G$ for every graph $G$, it follows that $r\left(G_{1},G_{2}\right) =r\left( G_{2},G_{1}\right)$. 
It is also convenient to note that $H_{1}\subseteq G_{1}$ and  $H_{2}\subseteq G_{2}$ imply $r\left(H_{1},H_{2}\right) \leq r\left(G_{1},G_{2}\right)$, and if $G_{1}$ has order $s$ and $G_{2}$ has order $t$, then $r\left(G_{1},G_{2}\right) \leq r\left(s,t\right)$. In particular, we have $r\left(F_{s}, F_{t}\right) \leq r\left(s,t\right)$. For any two positive integers $s$ and $t$, an upper bound for the Ramsey numbers $r\left(s, t\right)$ was found by Erd\"{o}s and Szekeres \cite{ES}, and the same bound was later rediscovered by Greenwood and Gleason \cite{GG}. Therefore, all Ramsey numbers $r\left(s,t\right)$ exist, and so do all Ramsey numbers $r\left(F_{s},F_{t}\right)$.

For an excellent introduction to the theory of Ramsey numbers, the reader is referred to the book by Graham, Rothschild, Spencer, and Solymosi \cite{Graham-Rothschild-Spencer-Solymosi}, and the dynamic survey “Small Ramsey numbers” by Radziszowsiki \cite{Radziszowsiki} provides a wealth of information on such Ramsey numbers. There are many interesting applications of Ramsey theory, these include results in theoretical computer science. The dynamic survey “Ramsey Theory Applications” by Rosta \cite{Rosta} is an excellent source for such applications.

In this paper, we present some results on the Ramsey number $r\left(
F_{s},F_{t}\right)$ and then apply them to establish lower bounds for $\max\left\{\mathrm{str}\left( G\right) +\mathrm{str}\left( \overline{G}\right)\right\} $, where the maximum is taken over all graphs $G$ of order $n$ such that neither $G$ nor $\overline{G}$ is an empty graph. 
We also extend existing necessary and sufficient conditions involving the strength of a graph.
Moreover, we investigate bounds for $\mathrm{str}\left( G\right) +\mathrm{str}\left( \overline{G}\right) $ whenever $G$ and $\overline{G}$ are nonempty graphs of order $n$.
The resulting upper bound is shown to be related to Ramsey numbers.
This paper proposes some open problems arising from our study.

\section{A Lower bound and some exact values for $r\left(F_{s},F_{t}\right)$}

In this section, we establish a lower bound for the Ramsey number $r\left(F_{s}, F_{t}\right) $ and determine some exact values for $r\left( F_{s}, F_{t}\right)$. 

To present our first result, we will utilize the next notable theorem of Chv\'{a}tal \cite%
{Chvatal}.

\begin{theorem}
\label{Chavatal}Let $T_{s}$ be any tree of order $s\geq 2$. For every integer $t \geq 2$, 
\begin{equation*}
r\left( T_{s},K_{t}\right) =1+\left( s-1\right) \left( t-1\right)\text{.}
\end{equation*}
\end{theorem}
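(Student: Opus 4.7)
The plan is to prove the two inequalities $r(T_{s},K_{t})\geq 1+(s-1)(t-1)$ and $r(T_{s},K_{t})\leq 1+(s-1)(t-1)$ separately, with the upper bound carrying most of the work.

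For the lower bound, I would exhibit a single graph on $(s-1)(t-1)$ vertices containing neither $T_{s}$ as a subgraph nor $K_{t}$ in its complement. The natural candidate is $G=(t-1)K_{s-1}$, the disjoint union of $t-1$ copies of $K_{s-1}$. Every component has only $s-1$ vertices, so the connected graph $T_{s}$ of order $s$ cannot embed into $G$. Its complement $\overline{G}$ is the complete multipartite graph with $t-1$ parts of size $s-1$, whose largest clique consists of one vertex from each part and therefore has exactly $t-1$ vertices, so $\overline{G}\not\supseteq K_{t}$.

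For the upper bound, I would proceed by induction on $t$. The base case $t=2$ is immediate: on $n=s$ vertices, either $G\neq K_{s}$ and thus $\overline{G}\supseteq K_{2}$, or $G=K_{s}$, which contains every tree of order $s$. For the inductive step with $t\geq 3$, let $G$ have order $n=1+(s-1)(t-1)$, and split on the minimum degree $\delta(G)$. If $\delta(G)\geq s-1$, then a standard greedy embedding shows $T_{s}\subseteq G$: order the vertices of $T_{s}$ so that each one after the first is adjacent to an earlier one, and at each step the parent has at least $s-1$ neighbors in $G$ but at most $s-2$ previously placed vertices, so an unused neighbor is always available.

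If instead $\delta(G)\leq s-2$, choose $v$ with $\deg_{G}(v)\leq s-2$ and let $S$ denote its set of non-neighbors in $G-v$. Then
\begin{equation*}
|S|\geq n-1-(s-2)=(s-1)(t-2)+1=1+(s-1)((t-1)-1)\text{,}
\end{equation*}
so the inductive hypothesis applied to $G[S]$ forces either $T_{s}\subseteq G[S]\subseteq G$, in which case we are done, or $G[S]$ contains an independent set of size $t-1$, which together with $v$ yields an independent set of size $t$ in $G$ and hence $K_{t}\subseteq\overline{G}$. The one genuine obstacle is the tree-embedding lemma under the minimum-degree hypothesis; this is itself a short induction on $s$ using leaf removal, and once it is granted the rest of the argument is routine counting.
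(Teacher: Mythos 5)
Your proof is correct. Note that the paper itself offers no proof of this statement --- it is quoted as a known theorem of Chv\'{a}tal with a citation --- so there is nothing internal to compare against; your argument is in fact the classical one (and essentially Chv\'{a}tal's original): the extremal graph $(t-1)K_{s-1}$ for the lower bound, and for the upper bound an induction on $t$ that splits on whether $\delta(G)\geq s-1$ (greedy embedding of the tree) or some vertex $v$ has degree at most $s-2$ (apply the inductive hypothesis to the set of non-neighbours of $v$, which has at least $1+(s-1)(t-2)$ vertices, and absorb $v$ into any independent set of size $t-1$ found there). All the counting checks out, so the proposal stands as a complete, self-contained proof.
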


We now provide a lower bound for $r\left( F_{s},F_{t}\right) $.

\begin{theorem}
\label{ramsey_lower} For every two integers $s$ and $t$ with $2\leq s\leq t$, 
\begin{equation*}
r\left( F_{s},F_{t}\right) \geq 1+\left( s-1\right) \left\lfloor
t/2\right\rfloor\text{.}
\end{equation*}
\end{theorem}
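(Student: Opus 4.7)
The plan is to exhibit an $F_{s}$-free graph $G$ of order $(s-1)\lfloor t/2\rfloor$ whose complement is also $F_{t}$-free; the lower bound on $r(F_{s},F_{t})$ then follows directly from the definition of a Ramsey number. Guided by the extremal construction underlying Theorem~\ref{Chavatal}, I would take $G=\lfloor t/2\rfloor K_{s-1}$, that is, $\lfloor t/2\rfloor$ vertex-disjoint copies of $K_{s-1}$, so that $|V(G)|=(s-1)\lfloor t/2\rfloor$.

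Verifying $G\not\supseteq F_{s}$ is immediate: because $v_{1}$ is adjacent to every other vertex of $F_{s}$, the graph $F_{s}$ is connected on $s$ vertices, so any copy of $F_{s}$ inside $G$ would have to lie in a single connected component. But every component of $G$ has only $s-1$ vertices, ruling this out.

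The more substantive step is ruling out $F_{t}\subseteq\overline{G}$. The complement $\overline{G}$ is the complete multipartite graph with $\lfloor t/2\rfloor$ parts each of size $s-1$, and its clique number is exactly $\lfloor t/2\rfloor$, since any clique uses at most one vertex per part. It therefore suffices to exhibit inside $F_{t}$ a clique of size $\lfloor t/2\rfloor+1$. I plan to identify this clique explicitly as $\{v_{1},v_{2},\dots,v_{\lfloor t/2\rfloor+1}\}$; by the defining conditions for $E(F_{t})$, checking pairwise adjacency reduces to verifying that $i\leq\lfloor t/2\rfloor$ and $j\leq t+1-i$ whenever $1\leq i<j\leq \lfloor t/2\rfloor+1$. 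Both inequalities are immediate from the index range together with the identity $\lfloor t/2\rfloor+\lceil t/2\rceil=t$.

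The main obstacle, and essentially the only insight required, is noticing that $F_{t}$ contains $K_{\lfloor t/2\rfloor+1}$ as a subgraph; this is what allows a Chv\'{a}tal-type construction with only $\lfloor t/2\rfloor$ copies of $K_{s-1}$, rather than the $t-1$ copies that would be needed to avoid $K_{t}$ in the complement. Once this clique is located, the remaining steps are routine.
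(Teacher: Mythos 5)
Your proof is correct. It differs from the paper's in execution, though the decisive observation is the same: both arguments hinge on locating the clique $K_{\lfloor t/2\rfloor+1}$ inside $F_{t}$ on the vertices $v_{1},\dots,v_{\lfloor t/2\rfloor+1}$ (your index check $i+j\leq 2\lfloor t/2\rfloor+1\leq t+1$ matches the paper's computation of $|E(H)|=\binom{\lfloor t/2\rfloor+1}{2}$). Where you diverge is in how the bound is then extracted. The paper notes additionally that $K_{1,s-1}\subseteq F_{s}$, applies the monotonicity $r(K_{1,s-1},K_{\lfloor t/2\rfloor+1})\leq r(F_{s},F_{t})$, and quotes Chv\'{a}tal's theorem to evaluate the left-hand side as $1+(s-1)\lfloor t/2\rfloor$. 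You instead unwind the extremal construction behind Chv\'{a}tal's lower bound and verify it directly: $G=\lfloor t/2\rfloor K_{s-1}$ avoids $F_{s}$ because $F_{s}$ is connected on $s$ vertices while every component of $G$ has only $s-1$, and $\overline{G}$, being complete multipartite with $\lfloor t/2\rfloor$ parts, has clique number $\lfloor t/2\rfloor$ and so cannot contain $F_{t}$. Your version is self-contained and does not need the full strength of Theorem~\ref{Chavatal} (only its lower-bound half, which you reprove); the paper's version is shorter and makes the dependence on the tree--complete-graph Ramsey number explicit. Both are complete and correct, including at the boundary cases $s=2$ and $t=2$.
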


\begin{proof}
First, recall the definition of $F_{s}$. Then $v_{1}$ is adjacent to $v_{j}$
($j\in \left[ 2,s\right] $), so $K_{1,s-1}\subseteq F_{s}$. 
Next, let $W=\left\{ w_{i}\left| i\in \left[ 1,\left\lfloor t/2\right\rfloor +1\right]
\right. \right\} $. Then the subgraph $H$ of $F_{t}$ induced by $W$ has the
vertex set $V\left( H\right) =\left\{ w_{i}\left| i\in \left[ 1,\left\lfloor
t/2\right\rfloor +1\right] \right. \right\} $ and the edge set 
$E\left( H\right) =\left\{ w_{i}w_{j}\left| i\in \left[ 1,\left\lfloor
t/2\right\rfloor \right] \text{ and }i<j\leq \left\lfloor t/2\right\rfloor
+1\right. \right\}$.
Thus, $\left| V\left( H\right) \right| =\left\lfloor t/2\right\rfloor +1$ and%
\begin{equation*}
\left| E\left( H\right) \right| =\left\lfloor t/2\right\rfloor +\left(
\left\lfloor t/2\right\rfloor -1\right) +\cdots +1=\binom{\left\lfloor
t/2\right\rfloor +1}{2}\text{.}
\end{equation*}%
Consequently, $H=K_{\left\lfloor t/2\right\rfloor +1}$ so that $K_{\left\lfloor
t/2\right\rfloor +1}\subseteq F_{t}$. Since $K_{1,s-1}\subseteq F_{s}$ and $%
K_{\left\lfloor t/2\right\rfloor +1}\subseteq F_{t}$, it follows from
Theorem \ref{Chavatal} that 
\begin{equation*}
r\left( F_{s},F_{t}\right) \geq r\left( K_{1,s-1},K_{\left\lfloor
t/2\right\rfloor +1}\right) =1+\left( s-1\right) \left\lfloor
t/2\right\rfloor\text{.}
\end{equation*}
\end{proof}

Let $t$ be an integer with $t \geq 2$. Then the empty graph $G=\left( t-1\right) K_{1}$ does not contain $F_{2}=K_{2}$
as a subgraph since $G$ does not have edges. Also, the complete graph $\overline{G}%
=K_{t-1}$ does not contain $F_{t}$ as a subgraph since $K_{t-1}$ has order $t-1$, while $F_{t}$ has order $t$.
Consequently, $r\left( F_{2},F_{t}\right) \geq t$. On the other hand, it is straightforward to see that $r\left(2,t\right)=t$, which implies that $r\left(F_{2}, F_{t}\right) \leq r\left(2,t\right)=t$. Combining the two inequalities, we have the following.

\begin{proposition}
\label{r(F2,Ft)} For every integer $t \geq 2$,
\begin{equation*}
r\left( F_{2},F_{t}\right)=t\text{.}
\end{equation*}
\label{r(F2,Ft)}
\end{proposition}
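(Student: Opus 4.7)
The plan is to prove the equality $r(F_2,F_t)=t$ by sandwiching the Ramsey number between two matching inequalities, one coming from an explicit extremal example and the other from comparison with the classical Ramsey number $r(2,t)$.

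For the lower bound $r(F_2,F_t)\geq t$, I would exhibit a single graph of order $t-1$ that avoids $F_2$ in itself and $F_t$ in its complement. The natural candidate is $G=(t-1)K_1$. Since $G$ has no edges, it certainly cannot contain $F_2=K_2$ as a subgraph; and since $\overline{G}=K_{t-1}$ has only $t-1$ vertices while $F_t$ has $t$ vertices by definition, $\overline{G}$ cannot contain $F_t$ as a subgraph either. This witnesses that the Ramsey number must be strictly greater than $t-1$.

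For the upper bound $r(F_2,F_t)\leq t$, I would invoke the general observation already recorded in the introduction: if $G_1$ has order $s$ and $G_2$ has order $t$, then $r(G_1,G_2)\leq r(s,t)$. Applied with $G_1=F_2$ (of order $2$) and $G_2=F_t$ (of order $t$), this yields $r(F_2,F_t)\leq r(2,t)$. The classical value $r(2,t)=t$ is immediate, since any graph on $t$ vertices either contains an edge (hence $K_2$) or is edgeless (so its complement is $K_t$).

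Combining the two inequalities completes the proof. There is no real obstacle here; the entire argument is essentially bookkeeping based on the definitions of $F_s$ and $F_t$ together with the trivial Ramsey number $r(2,t)=t$ and the monotonicity of $r$ under the subgraph relation.
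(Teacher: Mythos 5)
Your proposal is correct and matches the paper's own argument essentially verbatim: the lower bound via the witness $G=(t-1)K_{1}$ with $\overline{G}=K_{t-1}$ too small to contain $F_{t}$, and the upper bound via $r(F_{2},F_{t})\leq r(2,t)=t$. Nothing to add.
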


A formula for $r\left( P_{3}, G\right) $, where $G$ is a graph of order $n \geq 2$ without isolated vertices, was derived by Chv\'{a}tal
and Harary \cite{Chvatal-HararyIII}. It involves the concepts of $1$-factor
and edge independence number $\beta _{1}\left( \overline{G}\right) $ of the complement of a graph $G$ as stated next.

\begin{theorem}
\label{Chvatal-Harary}
\label{ramsey_F3_G}For every graph $G$ of order $n\geq 2$ without isolated
vertices, 
\begin{equation*}
r\left( P_{3},G\right) =\left\{ 
\begin{tabular}{ll}
$n$ & if $\overline{G}$ has a $1$-factor \\ 
$2n-2\beta _{1} \left(\overline{G}\right) -1$ & otherwise.%
\end{tabular}%
\right.
\end{equation*}
\end{theorem}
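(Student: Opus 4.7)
The approach reduces the Ramsey problem to a matching calculation. A graph $H$ has no $P_{3}$ iff $\Delta(H)\le 1$, i.e., $H$ is a matching together with isolated vertices. Hence $r(P_{3},G)$ is the least $N$ such that for every ``matching-plus-isolated'' graph $H$ on $N$ vertices one has $G\subseteq \overline{H}$. My plan is: (i) translate the embedding $G\subseteq \overline{H}$ into a condition on matchings of $\overline{G}$; (ii) read off the threshold $N$ by a pigeonhole count; (iii) exhibit explicit extremal $H$'s for the lower bounds.

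For (i), fix $H$ on $N$ vertices consisting of a matching $M$ of size $j$ together with $N-2j$ isolated vertices. An injection $\phi:V(G)\to V(H)$ realizes $G\subseteq \overline{H}$ iff no edge of $G$ maps to an edge of $H$. Writing $S=\phi(V(G))$ and $p=|\{e\in M:e\subseteq S\}|$, the $p$ matching edges of $H$ inside $S$ must be images of non-edges of $G$; since $M\cap S$ is itself a matching, its $\phi^{-1}$-preimage is a matching of size $p$ in $\overline{G}$, so a valid $\phi$ exists iff $p\le m:=\beta_{1}(\overline{G})$. Minimizing $p$ over $n$-subsets $S\subseteq V(H)$ gives $\min p=\max(0,\,j-(N-n))$, because each of the $N-n$ excluded vertices can destroy at most one matching edge. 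Since $j$ ranges up to $\lfloor N/2\rfloor$, the upper bound $r(P_{3},G)\le N$ holds precisely when $N\ge n$ and $\lfloor N/2\rfloor-(N-n)\le m$, which simplifies to $N\ge \max(n,\,2n-2m-1)$.

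The two cases of the theorem now follow. If $\overline{G}$ has a $1$-factor then $m=n/2$, so the inequality gives $r(P_{3},G)\le n$; sharpness is obtained from $H=(n-1)K_{1}$, whose complement $K_{n-1}$ has too few vertices to contain $G$, yielding $r(P_{3},G)\ge n$. If $\overline{G}$ has no $1$-factor then $m<n/2$ and the bound becomes $r(P_{3},G)\le 2n-2m-1$; sharpness is witnessed by taking $H$ to be a perfect matching on $2n-2m-2$ vertices, since any $n$-subset omits only $n-2m-2$ vertices and therefore contains at least $(n-m-1)-(n-2m-2)=m+1$ complete matching edges, which would demand a matching of size $m+1$ in $\overline{G}$, contradicting the definition of $m$.

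The main obstacle is the bookkeeping in step (i): one must check both that the greedy ``destroy one endpoint per matching edge while the quota $N-n$ permits'' strategy attains $\min p=\max(0,\,j-(N-n))$, and that once $p\le m$ is achievable, the matching edges of $M\cap S$ can actually be paired with a chosen $p$-matching in $\overline{G}$, extending arbitrarily on the remaining $n-2p$ vertices. Neither step is deep, but both directions of the equivalence have to be handled cleanly, which makes the argument slightly longer than the clean formula suggests.
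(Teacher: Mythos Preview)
The paper does not supply a proof of this theorem; it is quoted as a result of Chv\'{a}tal and Harary and then specialized to $G=F_{t}$ to obtain the corollary that follows. So there is no in-paper argument to compare against.

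Your proof is correct. The reduction (a graph is $P_{3}$-free iff it is a matching plus isolated vertices) is the natural starting point, and your translation of the embedding condition $G\subseteq\overline{H}$ into the inequality $p\le m=\beta_{1}(\overline{G})$, together with the pigeonhole computation $\min_{S}p=\max(0,\,j-(N-n))$, is clean and works in both directions. One cosmetic point: in your second lower-bound construction, when $n$ is odd with $m=(n-1)/2$ one has $|V(H)|=2n-2m-2=n-1<n$, so the phrase ``any $n$-subset omits only $n-2m-2$ vertices'' is vacuous (indeed $n-2m-2=-1$); but then $G\not\subseteq\overline{H}$ holds trivially on cardinality grounds, and since in this sub-case $2n-2m-1=n$, your first construction $H=(n-1)K_{1}$ already gives the required lower bound. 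This is not a gap. Finally, you never invoke the hypothesis that $G$ has no isolated vertices, and your argument goes through without it.
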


It is immediate that $\overline{F}_{t}=F_{t-1} \cup K_{1}$ and $\beta_{1}\left(\overline{F}_{t}\right) =\left\lfloor \left(t-1\right)/2\right\rfloor$ for integers $t \geq 2$. Therefore, the Ramsey number $r\left( F_{3},F_{t}\right) $ is determined by letting $G=F_{t}$ in Theorem \ref{ramsey_F3_G} as indicated next.

\begin{corollary} For every integer $t \geq 2$,
\label{r(F3,Ft)}
\begin{equation*}
r\left( F_{3},F_{t}\right) =\left\{ 
\begin{tabular}{ll}
$t+1$ & if $t$ is even\\ 
$t$ & if $t$ is odd.%
\end{tabular}%
\right.
\end{equation*}
\end{corollary}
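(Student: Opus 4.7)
The proof will be a direct application of Theorem \ref{Chvatal-Harary} with $G = F_{t}$, exploiting the identification $F_{3} = P_{3}$ so that $r\left(F_{3}, F_{t}\right) = r\left(P_{3}, F_{t}\right)$. First, I would check the hypothesis of Theorem \ref{Chvatal-Harary} that $F_{t}$ has no isolated vertices: by the definition of $F_{t}$, the case $i = 1$ of the edge description makes $v_{1}$ adjacent to every $v_{j}$ with $j \in \left[2, t\right]$, so $v_{1}$ is a common neighbor of every other vertex and $\delta\left(F_{t}\right) \geq 1$ for every $t \geq 2$.

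Next, using $\overline{F}_{t} = F_{t-1} \cup K_{1}$ as recorded in the excerpt, the isolated vertex contributed by the $K_{1}$ component of $\overline{F}_{t}$ cannot be covered by any matching, so $\overline{F}_{t}$ has no $1$-factor for any $t \geq 2$. Consequently the second case of Theorem \ref{Chvatal-Harary} always applies, and combining it with $\beta_{1}\left(\overline{F}_{t}\right) = \left\lfloor\left(t - 1\right)/2\right\rfloor$ (also recorded in the excerpt) gives
\[
r\left(F_{3}, F_{t}\right) = 2t - 2\left\lfloor\left(t - 1\right)/2\right\rfloor - 1.
\]
A short parity split then finishes the computation: when $t$ is even, $\left\lfloor\left(t - 1\right)/2\right\rfloor = \left(t - 2\right)/2$ gives $r = t + 1$, and when $t$ is odd, $\left\lfloor\left(t - 1\right)/2\right\rfloor = \left(t - 1\right)/2$ gives $r = t$.

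Because every step is either a definitional unfolding or a direct invocation of Theorem \ref{Chvatal-Harary}, there is no substantial obstacle; the only point worth explicitly verifying is that we always fall into the second case, which follows at once from the fact that $v_{1}$ has full degree $t - 1$ in $F_{t}$ and hence is isolated in $\overline{F}_{t}$. The boundary value $t = 2$, where $\overline{F}_{2} = 2K_{1}$ is edgeless, fits the same formula since $\beta_{1}\left(\overline{F}_{2}\right) = 0$ returns $r = 3 = t + 1$.
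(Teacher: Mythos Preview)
Your proof is correct and follows exactly the approach the paper intends: apply Theorem~\ref{Chvatal-Harary} with $G=F_{t}$ using $F_{3}=P_{3}$, observe that $\overline{F}_{t}=F_{t-1}\cup K_{1}$ has an isolated vertex so the second case applies, and plug in $\beta_{1}\left(\overline{F}_{t}\right)=\left\lfloor (t-1)/2\right\rfloor$. You simply make explicit the verifications (no isolated vertices in $F_{t}$, no $1$-factor in $\overline{F}_{t}$, the parity computation, and the boundary $t=2$) that the paper leaves implicit.
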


The preceding result shows that the bound presented in Theorem \ref{ramsey_lower} is exact when $s=3$ and $t \geq 2$.

There is an equivalent formulation of the definition of $r\left(G_{1}, G_{2}\right)$ in terms of $2$-colorings of a complete graph. Namely, $r\left(G_{1},G_{2}\right)$ is the least positive integer $n$ such that for every graph $G$ of order $n$, there is either a subgraph isomorphic to $G_{1}$, all whose edges are colored red (a \emph{red} $G_{1}$) or a subgraph isomorphic to $G_{2}$, all whose edges are colored blue (a \emph{blue} $G_{2}$). Using this definition, we next determine the Ramsey number $r\left(F_{4}, F_{t}\right)$.

\begin{theorem} For every integer $t \geq 3$,
\begin{equation*}
r\left(F_{4},F_{t}\right)=2t-1\text{.}
\end{equation*}
\label{r(F4,Ft)}
\end{theorem}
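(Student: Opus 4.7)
The lower bound $r(F_4, F_t) \geq 2t-1$ is proved by taking $G = K_{t-1,t-1}$: it is bipartite (so triangle-free and $F_4$-free), and its complement $\overline{G} = 2K_{t-1}$ is a disjoint union of two cliques of order $t-1$, so it cannot contain the connected graph $F_t$ of order $t$.

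For the upper bound I would proceed by induction on $t$. The base case $t = 3$ is immediate from Corollary \ref{r(F3,Ft)} (using $r(F_4,F_3) = r(F_3,F_4) = 5$), and $t = 4$ follows from a short parity argument in $K_7$: a vertex of red-degree $\geq 4$ would force its red neighbourhood to be red-edge-free (else any red triangle plus another red neighbour yields a red $F_4$) and hence span a blue $K_4 \supseteq F_4$, so all red-degrees would equal $3$, contradicting $3 \cdot 7 = 21$ being odd. For $t \geq 5$ the structural identity $F_t = K_1 \vee (F_{t-2} \cup K_1)$ is the backbone: it suffices to find a vertex $v_1$ in $G$ whose non-$G$-neighbourhood $M = V(G) \setminus N_G[v_1]$ supports an $F_{t-2}$ in $\overline{G}[M]$ together with at least one additional vertex, so that $\{v_1\}$ and those $t-1$ vertices realize $F_t$ in $\overline{G}$.

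I would then split into cases. If some $v$ has $\deg_G(v) \geq t$, then $N_G(v)$ must be independent in $G$ (any edge there together with another $G$-neighbour of $v$ produces $F_4$), so $\overline{G}[N_G(v)] \supseteq K_t \supseteq F_t$. Otherwise, $F_4$-freeness lets me write $G = L \cup T$ with $L$ triangle-free and $T$ a disjoint union of isolated triangles. If $T \neq \emptyset$, any $v_1 \in V(T)$ is $\overline{G}$-adjacent to all $2t-4 \geq r(F_4, F_{t-2}) = 2t-5$ vertices of $V \setminus V(T)$, so the inductive hypothesis applied to the $F_4$-free graph $G[V \setminus V(T)]$ produces the required $F_{t-2}$; analogously when $T = \emptyset$ and $G$ has some vertex of $G$-degree $\leq 3$, since then $|V \setminus N_G[v]| \geq 2t-5$ as well.

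The hard part will be the residual subcase in which $G$ is triangle-free with $4 \leq \delta(G) \leq \Delta(G) \leq t-1$: here the non-neighbourhood of any vertex has only $2t-6$ vertices, one short of what the induction requires. To handle it, I would exploit that each $N_G(v)$ is independent in $G$ and hence a clique $K_{\deg_G(v)}$ in $\overline{G}$, together with the fact that such a $G$ cannot be bipartite on $2t-1$ vertices (a bipartite graph of odd order has $\Delta \geq \lceil (2t-1)/2 \rceil = t$); for small $t$ these constraints are outright incompatible and the subcase is vacuous, while for larger $t$ I would choose $v_1$ to be a vertex of maximum $G$-degree (so that $N_G(v_1)$ already contributes a clique to $\overline{G}$) and then adjoin carefully selected vertices from $V \setminus N_G[v_1]$, using the bipartite-like structure between $N_G(v_1)$ and $V \setminus N_G[v_1]$ forced by triangle-freeness and $\delta(G) \geq 4$, to assemble $F_t$. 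This final step is where I expect the proof to be most delicate.
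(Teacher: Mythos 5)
Your lower bound is exactly the paper's: $K_{t-1,t-1}$ is $F_4$-free since $F_4$ contains a triangle, and $\overline{K_{t-1,t-1}}=2K_{t-1}$ has no component of order $t$, so it cannot contain $F_t$. The upper bound, however, is where your proposal has a genuine gap, and you acknowledge it yourself: the residual subcase in which $G$ is triangle-free on $2t-1$ vertices with $4\leq\delta(G)\leq\Delta(G)\leq t-1$ is never actually resolved. This is not a routine loose end --- it is precisely the situation your induction cannot reach, because $\left\vert V(G)\setminus N_G[v]\right\vert$ can be as small as $2t-6$, one vertex short of $r(F_4,F_{t-2})=2t-5$, so the inductive hypothesis does not apply to any vertex's non-neighbourhood. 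The only concrete tool you offer for this case is the claim that a bipartite graph of odd order $2t-1$ has $\Delta\geq t$, which is false as stated (a path $P_{2t-1}$ is bipartite of odd order with $\Delta=2$); presumably you meant that a \emph{regular} bipartite graph has parts of equal size, but even that does not dispose of the case, since the graphs in question need not be regular or bipartite, and triangle-free graphs on $2t-1$ vertices with $4\leq\delta\leq\Delta\leq t-1$ do exist for larger $t$ (e.g.\ blow-ups of $C_5$ adjusted by one vertex), so the subcase is not vacuous. A smaller slip: in the case $T\neq\emptyset$ you say $v_1\in V(T)$ is $\overline{G}$-adjacent to ``all $2t-4$ vertices of $V\setminus V(T)$,'' which is only correct when $T$ is a single triangle; the fix (delete only the one triangle containing $v_1$ and apply induction to the remaining $2t-4$ vertices) is easy, but should be said.

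For comparison, the paper does not induct at all: it works directly with a $2$-colouring of $K_{2t-1}$. Fix a vertex $v$; by pigeonhole it has $t-1$ incident edges of one colour, say red, to vertices $v_1,\ldots,v_{t-1}$. Any red edge among the $v_i$ gives a red $F_4$ (triangle plus pendant red edge at $v$), so otherwise $\{v_1,\ldots,v_{t-1}\}$ spans a blue $K_{t-1}$, and one then argues that the edges to the remaining $t-1$ vertices force either a red $F_4$ or a blue $K_t\supseteq F_t$. That direct argument avoids the degree-bounded triangle-free configuration entirely, which is exactly the configuration your approach leaves open; until you supply a complete argument for that subcase, your proof of the upper bound is incomplete.
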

\begin{proof}
It is known from Corollary \ref{r(F3,Ft)} that  
\begin{equation*}
r\left(F_{4},F_{3}\right)=r\left(F_{3},F_{4}\right)=4+1=5=2\cdot 3-1\text{,}
\end{equation*}
which implies that the result is true for $t=3$. Thus, we assume that $t$ is an integer with $t \geq 4$. First, we show that $r\left(F_{4},F_{t}\right) \geq 2t-1$. The complete bipartite graph $G$=$K_{t-1,t-1}$ does not contain $K_{3}$ as a subgraph. However, $F_{4}$ contains $K_{3}$ as a subgraph, and hence $F_{4}$ is not a subgraph of $G$. On the other hand, the graph $\overline{G}=2K_{t-1}$ does not contain $F_{t}$ as a subgraph since each component of $\overline{G}$ has order $t-1$ and $F_{t}$ has order $t$. Therefore, $r\left(F_{4},F_{t}\right) \geq 2t-1$.

Next, we show that $r\left(F_{4},F_{t}\right) \leq 2t-1$. Consider any $2$-coloring of the edges of $K_{2t-1}$, and let $v$ be a vertex of $K_{2t-1}$. We show that there is either a red $F_{4}$ or a blue $F_{t}$. Since $v$ is incident with $2t-2$ edges, it follows from the pigeonhole principle that at least $t-1$ of these $2t-2$ edges are colored the same, say red. Without loss of generality, we assume that the edges $vv_{i}$ ($i \in \left[1,t-1\right]$) in $K_{2t-1}$ are colored red. If any one of the edges $v_{i}v_{j}$ ($1 \leq i<j \leq t-1$) is colored red, then there is a red $F_{4}$; otherwise, all $\left(t-1\right)\left(t-2\right)/2$ of these edges are blue, producing a blue $K_{t-1}$. Now, consider $2$-colorings of the remaining edges $vw_{j}$ ($j \in \left[1,t-1\right]$), $v_{i}w_{j}$ ($i,j \in \left[1,t-1\right]$) and $w_{i}w_{j}$ ($1 \leq i<j \leq t-1$) of $K_{2t-1}$. If any of the edges $v_{i}w_{j}$ ($i,j \in \left[1,t-1\right]$) is colored blue, then the resulting $2$-coloring has a blue $K_{t}$, since the $2$-coloring has a blue graph induced by the set $\left\{ v_{i}\left\vert i\in \left[ 1,t-1\right] \right. \right\}$ and this blue graph is isomorphic to $K_{t-1}$. Consequently, this blue subgraph is isomorphic to $K_{t}$ and contains a blue $F_{t}$. Hence, all edges $v_{i}w_{j}$ ($i,j \in \left[1,t-1\right]$) are colored red. This $2$-coloring produces a red subgraph induced by the set $\left\{ v_{i}\left\vert i\in \left[ 1,t-1\right]  \right. \right\}$ and this red subgraph is isomorphic to $K_{t-1,t}$. Then if we introduce a new red edge among the vertices $v,w_{1},w_{2},\ldots,w_{t-1}$, we would produce a red $F_{4}$. Thus, all vertices $v,w_{1},w_{2},\ldots,w_{t-1}$ must be incident with blue edges forming a blue $K_{t}$ and resulting in a blue $F_{t}$ as a subgraph. Therefore, every $2$-coloring of the edges of $K_{2t-1}$ produces either a red $F_{4}$ or a blue $F_{t}$ so that $r\left(F_{4},F_{t}\right) \leq 2t-1$.  Combining the two inequalities, we have $r\left(F_{4},F_{t}\right) = 2t-1$.
\end{proof}

The exact values of $r\left( F_{s}, F_{t}\right) $ given in Table 1 are obtained from Proposition \ref{r(F2,Ft)}, Corollary \ref{r(F3,Ft)}, and Theorem \ref{r(F4,Ft)}, and indicate that the bound presented in Theorem \ref{ramsey_lower} is exact when $2\leq s\leq t\leq 4$. The Ramsey numbers $r\left( F_{s}, F_{t}\right) $ for $2 \leq s \leq t \leq 4 $ can also be found in papers \cite{Chvatal-HararyII} and \cite{Chvatal-HararyIII} by Chv\'{a}tal and Harary. 

\begin{table}[ht]
\caption{Small Ramsey numbers $r\left( F_{s},F_{t}\right) $ }
\begin{center}
\begin{tabular}{ccccc}
\\[-0.9em]
\hline
\\[-0.9em]
$G_{1}$ & $\quad \quad \quad \quad $ & $\ \ \ G_{2}\quad $ & $\quad \quad
\quad $ & $r\left( G_{1},G_{2}\right) $ \\ 
\\[-0.9em]
\hline
\\[-0.9em]
$F_{2}$ & $\quad $ & $F_{2}$ & $\quad $ & $2$ \\ 
$F_{2}$ & $\quad $ & $F_{3}$ & $\quad $ & $3$ \\ 
$F_{2}$ & $\quad $ & $F_{4}$ & $\quad $ & $4$ \\ 
$F_{3}$ & $\quad $ & $F_{3}$ & $\quad $ & $3$ \\ 
$F_{3}$ & $\quad $ & $F_{4}$ & $\quad $ & $5$ \\ 
$F_{3}$ & $\quad $ & $F_{5}$ & $\quad $ & $5$ \\
$F_{4}$ & $\quad $ & $F_{4}$ & $\quad $ & $7$ \\ 
$F_{4}$ & $\quad $ & $F_{5}$ & $\quad $ & $9$ \\ 
$F_{4}$ & $\quad $ & $F_{6}$ & $\quad $ & $11$ \\ 
$F_{4}$ & $\quad $ & $F_{7}$ & $\quad $ & $13$ \\ 
\\[-0.9em]
\hline
\end{tabular}%
\end{center}
\end{table}

The upper bound is unknown for $r\left(F_{s}, F_{t}\right)$ when $5 \leq s \leq t$. This motivates us to propose the next problem.

\begin{problem}
Find a good upper bound for $r\left( F_{s},F_{t}\right) $ when $5 \leq s \leq t$.
\end{problem}

The only known Ramsey numbers $r\left(F_{s}, F_{t}\right)$ for $4 \leq s  \leq t \leq 5$ are $r\left(F_{4}, F_{4}\right)=7$
and $r\left(F_{4}, F_{5}\right)=9$ (see Table 1), and $r\left(F_{5}, F_{5}\right)=10$ (see \cite{Burr}). Thus, it is natural
to propose the next problem.

\begin{problem}
Determine the exact values of $r\left( F_{s},F_{t}\right) $ for any integers $s \geq 5$ and $t \geq 6$.
\end{problem}

\section{Lower bounds and small values for $f\left( n\right) $}
Define $f\left( n\right) =\max\left\{\mathrm{str}\left( G\right) +%
\mathrm{str}\left( \overline{G}\right)\right\} $, where the maximum is taken over all graphs $G$ of order $n$ such that neither $G$ nor $\overline{G}$ is an empty graph. 
In this section, we show a sharp lower bound for $f\left( n\right) $. With the knowledge from the preceding section, 
we also provide another lower bound for $f\left( n\right) $.
To proceed with these, it is important to notice that there exist no integers $s$ and $t$ such that $r\left( F_{s}, F_{t}\right) >n \geq \max \left\{ s,t\right\} $ for $n=3$ (see Table 1). Thus, we assume that $n\geq 4$ when we consider the number $f\left( n\right) $ together with the Ramsey number $r\left( F_{s}, F_{t}\right)$.

We are now prepared to present lower bounds for $f\left(n\right)$. We begin with a lemma.

\begin{lemma}
\label{basic_1} If $r\left( F_{s},F_{t}\right) >n \geq \max \left\{
s,t\right\} $, then 
\begin{equation*}
f\left( n\right) \geq 4n-\left(s+t\right)+2\text{,}
\end{equation*}
where $s,t \in \left[2,n-1\right]$.
\end{lemma}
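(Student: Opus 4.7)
The plan is to exploit Theorem \ref{main2} together with the Ramsey hypothesis. The assumption $r(F_s, F_t) > n$ supplies, by definition of the Ramsey number, a graph $G$ of order $n$ such that $G$ does not contain $F_s$ as a subgraph and $\overline{G}$ does not contain $F_t$ as a subgraph. The idea is that this single graph $G$ will witness the claimed lower bound for $f(n)$, because each of the two ``$F$-avoiding'' conditions translates directly, via Theorem \ref{main2}, into a lower bound on the strength of one of $G$, $\overline{G}$.

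More precisely, from $G \not\supseteq F_s$ (equivalently, $\overline{\overline{G}} \not\supseteq F_s$) Theorem \ref{main2} applied to $\overline{G}$ yields $\mathrm{str}(\overline{G}) \geq 2n - s + 1$, and from $\overline{G} \not\supseteq F_t$ the same theorem applied to $G$ gives $\mathrm{str}(G) \geq 2n - t + 1$. Note that these applications are legal precisely because $s, t \in [2, n-1]$, which is part of the hypothesis.

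Before adding these, I must verify that $G$ qualifies for the maximum defining $f(n)$, i.e.\ that neither $G$ nor $\overline{G}$ is empty. If $G$ were empty then $\overline{G} = K_n$, and since $F_t$ has order $t \leq n$ and $K_n$ contains every graph of order at most $n$ as a subgraph, we would have $F_t \subseteq \overline{G}$, contradicting the choice of $G$. The symmetric argument rules out $\overline{G}$ being empty. Therefore $G$ is admissible and
\begin{equation*}
f(n) \;\geq\; \mathrm{str}(G) + \mathrm{str}(\overline{G}) \;\geq\; (2n - t + 1) + (2n - s + 1) \;=\; 4n - (s+t) + 2.
\end{equation*}

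There is no real obstacle here: the argument is a direct composition of the definition of $r(F_s, F_t)$ with Theorem \ref{main2}. The only point requiring a small amount of care is the nonemptiness check, which I would include explicitly so that the inequality $f(n) \geq \mathrm{str}(G) + \mathrm{str}(\overline{G})$ is justified under the defined maximum.
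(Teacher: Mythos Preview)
Your proof is correct and follows essentially the same approach as the paper's: use the Ramsey hypothesis to obtain a graph $G$ of order $n$ avoiding $F_s$ and whose complement avoids $F_t$, then apply Theorem~\ref{main2} twice to obtain the two strength bounds and add them. Your explicit verification that neither $G$ nor $\overline{G}$ is empty is a welcome addition that the paper's own proof leaves implicit.
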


\begin{proof}
By assumption, there exists a graph $G$ of order $n$ such that $%
F_{s}\nsubseteqq G$ and $F_{t}\nsubseteqq \overline{G}$. For every integer $%
n \geq \max \left\{ s,t\right\} $, where  $s,t \in \left[2,n-1\right]$, Theorem \ref{main2} and the latest
statement yield 
$\mathrm{str}\left( G\right) \geq 2n-s+1$ and $\mathrm{str}\left( 
\overline{G}\right) \geq 2n-t+1$.
Hence, $f\left( n\right) \geq 4n-\left(s+t\right)+2$.
\end{proof}

It is now possible to present the following lower bound.

\begin{theorem}
\label{lower_bound1} For every integer $n\geq 4$,
\begin{equation*}
f\left( n\right) \geq 3n+\left\lfloor n/2\right\rfloor -3\text{.}
\end{equation*}
\end{theorem}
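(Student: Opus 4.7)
The plan is to invoke Lemma \ref{basic_1} with parameters chosen to minimize $s+t$ subject to $r(F_s,F_t)>n$, and to handle the boundary case $n=4$ by hand. The sweet spot will be $s=4$, because Theorem \ref{r(F4,Ft)} supplies the exact formula $r(F_4,F_t)=2t-1$, which is tight enough to exceed $n$ already when $t$ is roughly $n/2$; other choices of $s$ (via the generic bound of Theorem \ref{ramsey_lower}) would force $t$ to be larger and thus weaken the resulting inequality.

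Concretely, for $n\geq 5$ I will set $s=4$ and $t=\lceil n/2\rceil+1$. Routine inequalities will confirm $s,t\in[2,n-1]$ and $t\geq 3$, and Theorem \ref{r(F4,Ft)} will give
\[
r(F_4,F_t)=2t-1=2\lceil n/2\rceil+1\geq n+1>n.
\]
Lemma \ref{basic_1} then produces
\[
f(n)\geq 4n-(4+\lceil n/2\rceil+1)+2 = 4n-\lceil n/2\rceil-3 = 3n+\lfloor n/2\rfloor-3,
\]
where the last equality uses the identity $n=\lceil n/2\rceil+\lfloor n/2\rfloor$.

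For $n=4$, Lemma \ref{basic_1} is inapplicable: the admissible range $s,t\in[2,3]$ together with Table 1 gives $r(F_s,F_t)\leq 3<4$, so its hypothesis can never be met. I would dispose of this case by exhibiting the witness $G=C_4$, for which $\overline{G}=2K_2$; short labeling arguments (together with the trivial lower bound of Lemma \ref{lemma_trivial}) yield $\mathrm{str}(C_4)=6$ and $\mathrm{str}(2K_2)=5$, so $f(4)\geq 11 = 3\cdot 4+\lfloor 4/2\rfloor-3$, as required.

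The only genuine obstacle will be this isolated case $n=4$, where the Ramsey machinery has no leverage and a concrete graph must be produced by hand; for every other $n$, the argument reduces to selecting the optimal pair $(s,t)$ and performing the routine arithmetic above.
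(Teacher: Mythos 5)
Your proof is correct, but it takes a genuinely different route from the paper. The paper's proof is purely constructive: it exhibits the explicit witness $H=K_{\left\lfloor n/2\right\rfloor }\cup K_{\left\lceil n/2\right\rceil }$ with $\overline{H}=K_{\left\lfloor n/2\right\rfloor ,\left\lceil n/2\right\rceil }$ and plugs in the known exact formulas $\mathrm{str}\left( K_{s}\cup K_{t}\right) =2\left( s+t\right) -3$ and $\mathrm{str}\left( K_{s,t}\right) =2s+t$ to get $\left(2n-3\right)+\left(n+\left\lfloor n/2\right\rfloor\right)=3n+\left\lfloor n/2\right\rfloor -3$ directly, uniformly for all $n\geq 4$. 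You instead route everything through the Ramsey machinery: Lemma \ref{basic_1} combined with the exact value $r\left(F_{4},F_{t}\right)=2t-1$ from Theorem \ref{r(F4,Ft)}, choosing $s=4$ and $t=\left\lceil n/2\right\rceil+1$; your parameter checks ($s,t\in\left[2,n-1\right]$, $n\geq\max\left\{s,t\right\}$, $2\left\lceil n/2\right\rceil+1>n$) and the arithmetic $4n-\left\lceil n/2\right\rceil-3=3n+\left\lfloor n/2\right\rfloor-3$ are all sound. Your separate treatment of $n=4$ is necessary and correct (indeed $C_{4}=K_{2,2}$ and $2K_{2}$ are exactly the paper's witness pair for $n=4$, with the roles of $G$ and $\overline{G}$ swapped). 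The two arguments are close cousins --- the extremal coloring behind $r\left(F_{4},F_{t}\right)\geq 2t-1$ is $K_{t-1,t-1}$ versus $2K_{t-1}$, which is the paper's witness for even $n$ --- but the paper's version buys an explicit extremal graph with exactly computed strengths and no case split, while yours fits more naturally into the paper's later observation that $f\left(n\right)=4n-\min\left\{s+t\left|r\left(F_{s},F_{t}\right)>n\right.\right\}+2$ and makes transparent why $\left(s,t\right)=\left(4,\left\lceil n/2\right\rceil+1\right)$ is the efficient choice.
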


\begin{proof}
Let $n$ be an integer with $n \geq 4$, and consider the graph $H=K_{\left\lfloor
n/2\right\rfloor }\cup K_{\left\lceil n/2\right\rceil }$ and its complement $\overline{H}=K_{\left\lfloor n/2\right\rfloor ,\left\lceil n/2\right\rceil }$. 
It is known from \cite{IOT} that 
$\mathrm{str}\left( K_{s}\cup K_{t}\right) =2\left( s+t\right) -3$
for every two integers $s\geq 2$ and $t\geq 2$, 
which implies that 
\begin{equation*}
\mathrm{str}\left( H\right)=2\left(\left\lfloor n/2\right\rfloor
+\left\lceil n/2\right\rceil\right)-3=2n-3\text{.}
\end{equation*}
It is also
known from \cite{IMO1} that $\mathrm{str}\left( K_{s,t}\right) =2s+t$
for every two integers $s$ and $t$ with $1\leq s\leq t$, which implies that 
\begin{equation*}
\mathrm{str}\left( \overline{H}\right) =2\left\lfloor n/2\right\rfloor
+\left\lceil n/2\right\rceil =n+\left\lfloor n/2\right\rfloor\text{.}
\end{equation*}
Therefore, $\mathrm{str}\left( H\right) +\mathrm{str}\left( \overline{H}\right) =3n+\left\lfloor n/2\right\rfloor -3$.
This implies that 
\begin{equation*}
\begin{split}
f\left( n\right)&=\max\left\{\mathrm{str}\left( G\right) +\mathrm{str}\left( \overline{G}\right) \right\}\geq \mathrm{str}\left( H\right) +\mathrm{str}\left( \overline{H}\right) \\
&=3n+\left\lfloor n/2\right\rfloor -3\text{,}
\end{split}
\end{equation*}
where the maximum is taken over all graphs $G$ of order $n$ such that neither $G$ nor $\overline{G}$ is an empty graph. 
\end{proof}

There is exactly one pair of nonempty graphs $G$ and $\overline{G}$ of order 
$3$, namely, $G=K_{1,2}$ and $\overline{G}=K_{1}\cup K_{2}$. With the aid of Observation \ref%
{observation} and Lemma \ref{lemma_trivial}, it is easy to determine that $\mathrm{str}\left( G\right) =4$ and $\mathrm{str%
}\left( \overline{G}\right) =3$, which implies that $f\left( 3\right) =7$. 
This indicates that $f\left( n\right)$ attains the bound given in Theorem \ref{lower_bound1} for $n$ = $3$.

We now present the following result, which gives a potentially improved
lower bound for $f\left( n\right) $.

\begin{theorem}
\label{lower-bound2} For every integer $n\geq 4$,
\begin{equation*}
f\left( n\right) \geq 4n-2\left\lceil \left(3+\sqrt{8n-7}\right)/2\right\rceil +2\text{.}
\end{equation*}
\end{theorem}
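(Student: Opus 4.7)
The plan is to invoke Lemma \ref{basic_1} with the symmetric choice $s = t = s_{0}$, where $s_{0} := \lceil (3 + \sqrt{8n - 7})/2 \rceil$; this will immediately yield $f(n) \geq 4n - 2 s_{0} + 2$, which is exactly the target. I therefore need to verify (i) $s_{0} \in [2, n-1]$ (whence $n \geq s_{0}$ is automatic) and (ii) $r(F_{s_{0}}, F_{s_{0}}) > n$. The symmetric choice is natural here because the bound $4n - (s+t) + 2$ in Lemma \ref{basic_1} is largest when $s+t$ is smallest, and for a fixed sum the Ramsey lower bound of Theorem \ref{ramsey_lower} is largest when $s$ and $t$ are as balanced as possible.

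I would first dispose of the small cases $n \in \{4, 5\}$, for which direct computation gives $s_{0} = n$ and the stated inequality reduces to $f(n) \geq 2n + 2$; this is already implied by Theorem \ref{lower_bound1}. For $n \geq 6$, condition (i) reduces, after squaring $\sqrt{8n-7} \leq 2n - 5$, to the elementary inequality $n^{2} - 7n + 8 \geq 0$, which holds precisely for $n \geq 6$.

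The main calculation is condition (ii). By Theorem \ref{ramsey_lower}, it suffices to show $(s_{0} - 1)\lfloor s_{0}/2 \rfloor \geq n$. From $s_{0} \geq (3 + \sqrt{8n-7})/2$ I would square $2 s_{0} - 3 \geq \sqrt{8n-7}$ to obtain $s_{0}^{2} - 3 s_{0} + 4 \geq 2n$, and then split on the parity of $s_{0}$. For even $s_{0}$, $(s_{0}-1)\lfloor s_{0}/2 \rfloor = s_{0}(s_{0}-1)/2$, and the needed $s_{0}^{2} - s_{0} \geq 2n$ follows from $s_{0} \geq 2$. For odd $s_{0}$, $(s_{0}-1)\lfloor s_{0}/2 \rfloor = (s_{0}-1)^{2}/2$, and $(s_{0}-1)^{2} \geq 2n$ follows from $s_{0} \geq 3$. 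Since $s_{0} \geq 4$ whenever $n \geq 4$, both subcases succeed, and Theorem \ref{ramsey_lower} yields $r(F_{s_{0}}, F_{s_{0}}) \geq 1 + (s_{0}-1)\lfloor s_{0}/2 \rfloor \geq n + 1 > n$. Applying Lemma \ref{basic_1} then completes the argument. The only real friction is the parity split in this final verification, which is elementary but unavoidable given that $(s-1)\lfloor s/2 \rfloor$ has different closed forms on even and odd integers; identifying the symmetric choice $s = t = s_{0}$ and matching it to the ceiling expression in the statement is the conceptual step.
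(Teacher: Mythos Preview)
Your proof is correct and follows essentially the same approach as the paper: choose $s=t=s_{0}=\lceil (3+\sqrt{8n-7})/2\rceil$, use Theorem~\ref{ramsey_lower} to get $r(F_{s_{0}},F_{s_{0}})>n$, and conclude via Lemma~\ref{basic_1}. Your version is in fact more careful than the paper's terse proof: the paper simply asserts $1+(s_{0}-1)\lfloor s_{0}/2\rfloor>n$ and $n\geq s_{0}$ without verification, whereas you supply the parity-split computation and, by treating $n\in\{4,5\}$ separately via Theorem~\ref{lower_bound1}, you respect the hypothesis $s,t\in[2,n-1]$ of Lemma~\ref{basic_1} that the paper's direct argument technically violates in those two cases (where $s_{0}=n$).
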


\begin{proof}
If we let $s=\left\lceil\left( 3+\sqrt{8n-7}\right) /2\right\rceil$\ and $n\geq 4$, 
then $s\geq 4$. It follows from Theorem \ref{ramsey_lower} that
\begin{equation*}
r\left( F_{s},F_{s}\right) \geq 1+\left( s-1\right) \left\lfloor
s/2\right\rfloor > n\text{.}
\end{equation*} 
On the other hand, the inequality $n \geq s=\left\lceil\left( 3+\sqrt{8n-7}\right) /2\right\rceil$ holds for every integer $n\geq 4$. Thus, we
conclude by Lemma \ref{basic_1} that 
\begin{equation*}
f\left( n\right) \geq 4n-2\left\lceil \left(3+\sqrt{8n-7}\right)/2\right\rceil +2\text{.}
\end{equation*}
\end{proof}


It is now important to observe that Lemma \ref{basic_1} implies that 
\begin{equation*}
f\left( n\right) = 4n-\min \left\{ s+t\left| r\left( F_{s},F_{t}\right)>n\right. \right\} +2
\end{equation*}
if $n \geq \max \left\{ s,t\right\}$ for a pair of integers $s$ and $t$, where the minimum occurs.
Table 2 summarizes the small values for $f (n)$ obtained from the above observations and the facts from the preceding section.

\begin{table}[ht]
\caption{Small values for $f\left(n\right)$}
\begin{tabular}{lll}
\hline
\\[-0.9em]
$n$ & \multicolumn{1}{c}{$f\left(n\right)$} & \multicolumn{1}{c}{Reasons for equality}  \\ \\[-0.9em] 
\hline 
\\[-0.9em]
$3$ & $\quad 7 \quad $ & $ \quad\quad G=K_{1,2}$ and $\overline{G}=K_{1} \cup K_{2}$\\ 
$4$ & $\quad 11 \quad $ & $ \quad\quad r\left( F_{3},F_{4}\right)=5$\\ 
$5 $ & $\quad 14 $ & $\quad\quad r\left( F_{4},F_{4}\right)=7$\\ 
$6$ & $\quad 18 $ & $\quad\quad r\left( F_{4},F_{4}\right)=7$ \\ 
$7$ & $\quad 21 $ & $\quad\quad r\left( F_{4},F_{5}\right)=9$ \\ 
$8$ & $\quad 25 $ & $\quad\quad r\left( F_{4},F_{5}\right)=9$ \\ 
$9$ & $\quad 28$ & $\quad\quad r\left( F_{5},F_{5}\right)=10$ \\ 
$10$ & $\quad 32$ & $\quad\quad r\left( F_{4},F_{6}\right)=11$ \\ 
$11$ & $\quad 35$ & $\quad\quad r\left( F_{4},F_{7}\right)=13$ and  $r\left( F_{5},F_{6}\right) \geq 13$\\ 
$12$ & $\quad 39$ & $\quad\quad r\left( F_{4},F_{7}\right)=13$ and  $r\left( F_{5},F_{6}\right) \geq 13$\\ 
\\[-0.9em]
\hline
\end{tabular}%
\end{table}

As mentioned above, $f\left(n\right)$ attains the bound presented in Theorem 3.1 for $n=3$. Indeed, $f\left(n\right)$ attains the same bound for $n \in \left[4,12 \right]$.
However, we do not know whether the case for $n \geq 13$. Thus, we propose the next two problems.

\begin{problem}
Find good lower and upper bounds for $f\left( n\right) $ when $n \geq 13$.
\end{problem}

\begin{problem}
Determine the exact values of $f\left(n\right)$ for integers $n \geq 13$.
\end{problem}

\section {Additional notes}

In this section, we extend Theorems \ref{main1}, \ref{main2}, and \ref{main3} stated in the introduction. To achieve this, we introduce the first element in the class of graphs $F_{k}$ defined in the introduction as $F_{1}=K_{1}$. We also investigate bounds for $\mathrm{str}\left( G\right) +\mathrm{str}\left( \overline{G}\right)$ whenever $G$ and $\overline{G}$ are nonempty graphs of order $n$.

Theorem \ref{main1} can be extended as indicated next.
\begin{theorem}
\label{extension1} Let $G$ be a nonempty graph of order $n$. Then $\mathrm{str}\left(G\right) \leq 2n-k$ if and only if $\overline{G}$ contains $F_{k}$ as a subgraph, where $k\in \left[ 1,n\right] $.
\end{theorem}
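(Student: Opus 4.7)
The plan is to reduce to Theorem \ref{main1}, which already handles $k \in [2, n-1]$, and to verify the two new boundary cases $k=1$ and $k=n$ separately.

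For $k=1$ the statement is essentially vacuous. The graph $F_{1}=K_{1}$ is a single vertex and is trivially a subgraph of $\overline{G}$ (which has order $n\geq 2$ since $G$ is nonempty). On the other side, for any numbering $f$ and any edge $uv\in E(G)$, the labels $f(u),f(v)$ are distinct elements of $[1,n]$, so $f(u)+f(v)\leq (n-1)+n=2n-1$; thus $\mathrm{str}(G)\leq 2n-1$ holds always. Both sides of the biconditional therefore hold trivially.

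For $k=n$ I would adapt the argument behind Theorem \ref{main1}. The key combinatorial observation is that the set of large pairs $\{\{a,b\}:1\leq a<b\leq n,\ a+b\geq n+1\}$, viewed as edges on the vertex set $[1,n]$, forms a graph isomorphic to $F_{n}$ via the label-reversal bijection $a\mapsto v_{n+1-a}$; concretely, $v_{i}v_{j}\in E(F_{n})$ with $i<j$ is equivalent to $1\leq i<j\leq n$ together with $i+j\leq n+1$ (the upper bound $i\leq\lfloor n/2\rfloor$ from the definition of $F_{n}$ is automatic from $2i<i+j\leq n+1$). With this identification in place, both directions follow directly. Given a numbering $f$ with $\mathrm{str}_{f}(G)\leq n$, every pair of vertices whose labels sum to at least $n+1$ must be a non-edge of $G$, hence an edge of $\overline{G}$; the map $v_{i}\mapsto f^{-1}(n+1-i)$ then embeds $F_{n}$ into $\overline{G}$. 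Conversely, an embedding $\phi:V(F_{n})\to V(\overline{G})$ yields the numbering $f(\phi(v_{i}))=n+1-i$, under which any edge of $G$ must join vertices whose indices satisfy $i+j\geq n+2$ (otherwise $v_{i}v_{j}\in E(F_{n})$ would produce an edge of $\overline{G}$ coinciding with the assumed edge of $G$), so $f(u)+f(v)\leq n$ and $\mathrm{str}(G)\leq n$.

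The main obstacle is essentially bookkeeping: one must verify that the bijection between large pairs and edges of $F_{n}$ continues to work when $F_{n}$ now spans all $n$ labels rather than a strict subset, but no new ideas are required beyond those already present in Theorem \ref{main1}.
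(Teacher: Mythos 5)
Your proposal is correct. The treatment of $k\in[2,n-1]$ (quote Theorem \ref{main1}) and of $k=1$ (both sides of the biconditional hold trivially) coincides with the paper's. The difference is in the case $k=n$: the paper passes to the complement, noting that $F_{n}\subseteq\overline{G}$ forces $G\subseteq\overline{F_{n}}=F_{n-1}\cup K_{1}$, and then exhibits an explicit numbering of $F_{n-1}\cup K_{1}$ (isolated vertex labeled $n$, $v_{i}\mapsto i$) of strength at most $n$; this establishes only the ``if'' direction, the converse being left implicit. You instead work directly with the label-reversal bijection $a\mapsto v_{n+1-a}$, under which the edges of $F_{n}$ correspond exactly to the pairs $\{a,b\}$ with $a+b\geq n+1$ (your verification that the constraint $i\leq\lfloor n/2\rfloor$ is automatic from $2i<i+j\leq n+1$ is correct), and this identification yields both implications symmetrically: a numbering of strength at most $n$ forces all large pairs into $\overline{G}$ and hence produces an embedded $F_{n}$, while an embedding of $F_{n}$ produces a numbering whose edge labels are all at most $n$. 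The two numberings are essentially reversals of one another, so the constructions are close in spirit, but your version is the more complete one, since it explicitly proves the ``only if'' direction for $k=n$, which the paper's written proof does not address.
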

\begin{proof}
It is known from Theorem \ref{main1} that the theorem is true for all values of $k$ with $k \in \left[2,n-1\right]$. 

For $k=1$, the result is true since the maximum possible strength is $2n-1$. Thus, assume that $k=n \geq 2$. Then $F_{n} \subseteq \overline{G}$. This implies that either $G=F_{n-1} \cup K_{1}$ or $G=H \subseteq F_{n-1} \cup K_{1}$. 
Let $F_{n-1} \cup K_{1}$ be the graph with 
\begin{equation*}
V\left( F_{n-1} \cup K_{1}\right)=\left\{u \right\} \cup \left\{ v_{i}\left\vert i\in \left[ 1,n-1\right] \right. \right\} 
\end{equation*}
and 
\begin{equation*}
E\left( F_{n-1} \cup K_{1}\right) =\left\{ v_{i}v_{j}\left\vert i\in \left[
1,\left\lfloor \left(n-1\right)/2\right\rfloor \right] \text{ and }j\in \left[ 1+i,n-i%
\right] \right. \right\}\text{,}
\end{equation*}
and consider the numbering $f:V\left( F_{n-1} \cup K_{1}\right) \rightarrow \left[ 1,n\right] $ such that $f\left(u\right)=n$ and $f\left(v_i\right)=i$ ($i \in \left[ 1,n-1\right]$). 
Then $f$ has the property that 
\begin{equation*}
\mathrm{str}\left( F_{n-1} \cup K_{1}\right) \leq f\left(v_{1}\right)+f\left(v_{n-1}\right)=1+\left(n-1\right)=n\text{.}
\end{equation*}
This together with $H  \subseteq F_{n-1} \cup K_{1}$ implies that 
$\mathrm{str}\left(H\right) \leq \mathrm{str}\left(F_{n-1} \cup K_{1}\right)\leq n$,
giving the desired result.
\end{proof}

In light of Theorem \ref{extension1}, Theorems \ref{main2} and \ref{main3} are now extended as follows.
\begin{theorem}
\label{extension2} Let $G$ be a nonempty graph of order $n$. Then $%
\mathrm{str}\left( G\right) \geq 2n-k+1$ if and only if $\overline{G}$ does
not contain $F_{k}$ as a subgraph, where $k\in \left[ 1,n\right] $.
\end{theorem}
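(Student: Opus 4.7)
The plan is to derive Theorem \ref{extension2} directly as the logical contrapositive of Theorem \ref{extension1}. Since $\mathrm{str}(G)$ takes integer values, the two inequalities $\mathrm{str}(G) \leq 2n-k$ and $\mathrm{str}(G) \geq 2n-k+1$ are mutually exclusive and exhaustive. Likewise, for a fixed graph $G$ of order $n$, the two statements ``$\overline{G}$ contains $F_k$ as a subgraph'' and ``$\overline{G}$ does not contain $F_k$ as a subgraph'' are negations of each other. So the biconditional in Theorem \ref{extension2} is exactly obtained by negating each side of the biconditional in Theorem \ref{extension1}.

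Concretely, I would write: suppose first that $\mathrm{str}(G) \geq 2n-k+1$. If $\overline{G}$ contained $F_k$ as a subgraph, then by Theorem \ref{extension1} we would have $\mathrm{str}(G) \leq 2n-k$, contradicting $\mathrm{str}(G) \geq 2n-k+1$. Hence $\overline{G}$ does not contain $F_k$ as a subgraph. Conversely, suppose $\overline{G}$ does not contain $F_k$ as a subgraph. If $\mathrm{str}(G) \leq 2n-k$ held, then the other direction of Theorem \ref{extension1} would force $\overline{G}$ to contain $F_k$ as a subgraph, again a contradiction. So $\mathrm{str}(G) \geq 2n-k+1$. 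The range $k \in [1,n]$ carries over verbatim from Theorem \ref{extension1}, so no case analysis on $k$ is needed.

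There is essentially no obstacle here, since all the nontrivial content (in particular the extreme cases $k=1$ and $k=n$ requiring the construction with the numbering $f(u)=n$, $f(v_i)=i$) has already been packaged inside Theorem \ref{extension1}. The only thing to watch is that the theorem insists $G$ is nonempty, which matches the hypothesis in Theorem \ref{extension1}; no extra hypothesis needs to be introduced. Likewise, the extended analogue of Theorem \ref{main3} can then be read off in the same manner by combining the extended Theorem \ref{extension2} with Lemma \ref{lemma_trivial}, if the authors wish to include it.
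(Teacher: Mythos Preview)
Your proposal is correct and matches the paper's approach exactly: the paper gives no separate proof for Theorem~\ref{extension2}, stating it immediately after Theorem~\ref{extension1} with only the remark ``In light of Theorem~\ref{extension1},'' just as Theorem~\ref{main2} was earlier introduced as the contrapositive of Theorem~\ref{main1}. Your explicit spelling out of the two contrapositive directions is more detailed than what the paper provides, but the underlying argument is identical.
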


\begin{theorem}
\label{extension3} Let $G$ be a nonempty graph of order $n$ with $\delta \left( G\right)
=n-k$, where $k\in \left[ 1,n\right] $. Then $\mathrm{str}\left( G\right)
=2n-k$ if and only if $\overline{G}$ contains $F_{k}$ as a subgraph.
\end{theorem}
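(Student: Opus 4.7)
My plan is to combine Lemma~\ref{lemma_trivial} with the extended upper-bound characterization Theorem~\ref{extension1}, in exactly the same way that Theorem~\ref{main3} is derived from Lemma~\ref{lemma_trivial} and Theorem~\ref{main1}. Since $\delta(G) = n - k$, Lemma~\ref{lemma_trivial} yields the lower bound $\mathrm{str}(G) \geq n + \delta(G) = 2n - k$, while Theorem~\ref{extension1} identifies the reverse inequality $\mathrm{str}(G) \leq 2n - k$ as equivalent to the subgraph condition $F_{k} \subseteq \overline{G}$. These two facts pin $\mathrm{str}(G)$ down from both sides at exactly the value $2n-k$.

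For the \emph{if} direction, I would assume $\overline{G}$ contains $F_{k}$ and invoke Theorem~\ref{extension1} to obtain $\mathrm{str}(G) \leq 2n - k$; combining with Lemma~\ref{lemma_trivial} then forces $\mathrm{str}(G) = 2n - k$. For the \emph{only if} direction, $\mathrm{str}(G) = 2n - k$ trivially implies $\mathrm{str}(G) \leq 2n - k$, so Theorem~\ref{extension1} immediately supplies $F_{k} \subseteq \overline{G}$. Both halves are essentially one-line deductions once Theorem~\ref{extension1} is in hand.

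The step I expect to require the most care is the boundary value $k = n$, where $\delta(G) = 0$ and Lemma~\ref{lemma_trivial} does not apply directly to give the matching lower bound. To close this case I would appeal to Observation~\ref{observation} in order to strip the isolated vertices and reduce to a subgraph $G'$ of $G$ with $\delta(G') \geq 1$ and $\mathrm{str}(G') = \mathrm{str}(G)$, then translate the hypothesis $F_{n}\subseteq \overline{G}$ into structural information about $G'$ strong enough to recover the required equality $\mathrm{str}(G) = n$. This reduction is the main technical obstacle to handling all of $k \in [1,n]$ uniformly, since for every intermediate $k \in [1,n-1]$ the argument is a transparent assembly of Lemma~\ref{lemma_trivial} and Theorem~\ref{extension1}.
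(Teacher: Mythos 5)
Your main line — Lemma~\ref{lemma_trivial} for the lower bound $\mathrm{str}(G)\geq n+\delta(G)=2n-k$ and Theorem~\ref{extension1} for the equivalence $\mathrm{str}(G)\leq 2n-k\Leftrightarrow F_{k}\subseteq\overline{G}$ — is exactly the derivation the paper intends (it states Theorem~\ref{extension3} with no separate proof, as an immediate consequence "in light of Theorem~\ref{extension1}"), and it works for every $k\in[1,n-1]$: for $k=1$ the hypothesis forces $G=K_{n}$, both sides of the equivalence are automatic, and for $k\in[2,n-1]$ one has $\delta(G)=n-k\geq 1$ so Lemma~\ref{lemma_trivial} applies.

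The case $k=n$ that you flag is, however, not a technicality you can close by stripping isolated vertices — it is a genuine failure of the "if" direction, and your proposed reduction via Observation~\ref{observation} cannot succeed. Take $G=K_{2}\cup 2K_{1}$, so $n=4$, $\delta(G)=0$ and $k=4$. Then $\overline{G}=K_{4}-e$ contains $F_{4}=K_{1,3}+e$ (a triangle with a pendant edge), so the subgraph hypothesis holds, yet Observation~\ref{observation} gives $\mathrm{str}(G)=\mathrm{str}(K_{2})=3\neq 4=2n-k$. More generally, $G=K_{2}\cup(n-2)K_{1}$ has $\mathrm{str}(G)=3$ while $F_{n}\subseteq K_{n}-e=\overline{G}$ for all $n\geq 4$. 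The root cause is that Theorem~\ref{extension1} supplies only the upper bound $\mathrm{str}(G)\leq n$ in this case, and no matching lower bound $\mathrm{str}(G)\geq n$ exists once isolated vertices are present: after deleting them one only gets $\mathrm{str}(G)=\mathrm{str}(G')\geq n'+\delta(G')$ with $n'$ possibly much smaller than $n$. (The "only if" direction at $k=n$ is fine, since $\mathrm{str}(G)=n$ gives $\mathrm{str}(G)\leq n$ and Theorem~\ref{extension1} applies.) So your instinct was correct, but the honest conclusion is that the equivalence as stated should be restricted to $k\in[1,n-1]$ (equivalently $\delta(G)\geq 1$ when $k=n$ is excluded); the paper appears to have overlooked this boundary case.
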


In 1956, Nordhaus and Gaddum \cite{NG} established lower and upper bounds for the sum and the product of the chromatic number of a graph and its complement. Such inequalities established for any parameter have become known as \emph{Nordhaus-Gaddum inequalities}. Relations of a similar type have been found in several hundred papers on many other parameters. Readers interested in further knowledge on this topic may consult the survey paper by Aouchiche and Hansen \cite{AH1}. 

In 1974, Chartrand and Schuster \cite{GS} proved the following lower and upper bounds for the sum of the independence number $\beta\left(G\right)$ of a graph $G$ and its complement $\overline{G}$. Their lower bound is related to Ramsey numbers $r\left(s,t\right)$.
\begin{theorem}
\label{Nordhaus–Gaddum bound}
For every graph $G$ of order $n$, 
\begin{equation*}
 \sigma_{n} \leq \beta\left( G\right) +\beta\left( \overline{G}\right) \leq n+1\text{,}
\end{equation*}
where $\sigma_{n}=\min \left\{ a+b\left| r\left( a+1,b+1\right)>n\right. \right\}$.
\end{theorem}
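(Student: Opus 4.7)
The plan is to handle the two inequalities independently, after first rewriting the sum via the identity $\beta(\overline{G}) = \omega(G)$, where $\omega(G)$ denotes the clique number of $G$: independent sets in $\overline{G}$ are exactly cliques in $G$. The task then reduces to showing $\sigma_{n} \le \alpha(G) + \omega(G) \le n+1$, where $\alpha(G) = \beta(G)$.

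For the upper bound, I would fix a maximum independent set $S \subseteq V(G)$ with $|S| = \beta(G)$ and a maximum clique $K \subseteq V(G)$ with $|K| = \omega(G) = \beta(\overline{G})$. The key observation is that $|S \cap K| \le 1$: two distinct vertices in $S \cap K$ would simultaneously be non-adjacent in $G$ (being in the independent set $S$) and adjacent in $G$ (being in the clique $K$), which is impossible. Inclusion--exclusion then gives
\begin{equation*}
\beta(G) + \beta(\overline{G}) = |S| + |K| = |S \cup K| + |S \cap K| \le n + 1.
\end{equation*}

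For the lower bound, the idea is to use $G$ itself as the witness that keeps $\sigma_{n}$ small. Setting $a := \omega(G)$ and $b := \alpha(G)$, the graph $G$ has order $n$, contains no $K_{a+1}$ (by definition of $\omega$), and its complement $\overline{G}$ contains no $K_{b+1}$, because a $K_{b+1}$ in $\overline{G}$ would correspond to an independent set of size $b+1$ in $G$, contradicting $\alpha(G) = b$. By the very definition of the Ramsey number, $r(a+1, b+1) > n$, so the pair $(a,b)$ is admissible in the set whose minimum defines $\sigma_{n}$, giving
\begin{equation*}
\beta(G) + \beta(\overline{G}) = \alpha(G) + \omega(G) = a + b \ge \sigma_{n}.
\end{equation*}

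The only real subtlety is bookkeeping: one must keep straight the correspondence between independent sets in $\overline{G}$ and cliques in $G$, and be careful with the $+1$ shifts of the Ramsey parameters in the definition of $\sigma_{n}$. No substantial combinatorial machinery is required — the intersection argument is a one-line observation, and the lower bound is an almost immediate unpacking of the Ramsey-number definition.
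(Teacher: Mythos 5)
Your proof is correct. Note that the paper does not actually prove this statement --- it is quoted as a known result of Chartrand and Schuster \cite{GS} --- and your argument (the intersection bound $|S\cap K|\leq 1$ for the upper bound, and exhibiting $G$ itself as the witness that $r\left(\omega(G)+1,\beta(G)+1\right)>n$ for the lower bound) is precisely the standard proof of that result, with the $+1$ shifts and the identification $\beta\left(\overline{G}\right)=\omega(G)$ handled correctly.
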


The following upper bound for the strength of a graph was found in \cite{IMT}.
\begin{theorem}
\label{upper_bound_strength}
For every graph $G$ of order $n$, 
\begin{equation*}
\mathrm{str}\left( G\right) \leq 2n-\beta\left(G\right)\text{.}
\end{equation*}
\end{theorem}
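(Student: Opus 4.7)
The plan is to derive this bound as a short consequence of Theorem \ref{extension1}. Writing $k = \beta(G)$, my goal reduces to exhibiting $F_k$ as a subgraph of $\overline{G}$; the conclusion $\mathrm{str}(G) \leq 2n-k$ will then be immediate from the extended version of Theorem \ref{main1}. Since the strength is only defined for nonempty graphs, I implicitly take $G$ to be nonempty, which forces $k = \beta(G) \leq n-1$, so $k$ lies in the admissible range $[1,n]$ of Theorem \ref{extension1}.

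The first key step is the purely structural observation that $F_k \subseteq K_k$. By the definition recalled in the introduction, $F_k$ is a graph on the $k$ vertices $v_1, \ldots, v_k$, so trivially $F_k$ is a subgraph of the complete graph on those same $k$ vertices. The second step is to translate $\beta(G) = k$ into a structural fact about $\overline{G}$: any independent set of size $k$ in $G$ is, by complementation, the vertex set of a $K_k$ in $\overline{G}$. Chaining these,
\begin{equation*}
F_k \;\subseteq\; K_k \;\subseteq\; \overline{G}.
\end{equation*}

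With $F_k \subseteq \overline{G}$ in hand, Theorem \ref{extension1} applied with this value of $k$ yields $\mathrm{str}(G) \leq 2n - k = 2n - \beta(G)$, as required. There is essentially no obstacle in this argument; everything is a two-line reduction to the extended form of Theorem \ref{main1}. The only checkpoint worth flagging is that the range of $k$ in Theorem \ref{extension1} is $[1,n]$ (rather than $[2, n-1]$ as in the original Theorem \ref{main1}), so the extreme cases $\beta(G)=1$ (i.e.\ $G = K_n$) and larger values near $n$ are handled directly without needing a separate ad hoc argument.
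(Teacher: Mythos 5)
Your argument is correct. Note first that the paper itself gives no proof of this theorem: it is quoted from the reference [IMT] ("On the strength and independence number of graphs"), so there is no in-paper proof to compare against. Your derivation is nonetheless a valid and self-contained one within the framework of this paper: $F_k$ is a graph on $k$ vertices, hence $F_k \subseteq K_k$; an independent set of size $k=\beta(G)$ in $G$ induces a $K_k$ in $\overline{G}$; and Theorem \ref{extension1} (with the extended range $k\in[1,n]$, which covers $\beta(G)=1$, i.e.\ $G=K_n$) then gives $\mathrm{str}(G)\leq 2n-\beta(G)$. Your range check is also right: $G$ nonempty forces $\beta(G)\leq n-1$. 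The only remark worth adding is that the result also admits a one-line direct proof that bypasses the $F_k$ machinery entirely: assign the labels $n-\beta+1,\ldots,n$ to the vertices of a maximum independent set $S$ and the labels $1,\ldots,n-\beta$ to the remaining vertices; since no edge joins two vertices of $S$, every edge label is at most $n+(n-\beta)=2n-\beta$. This is presumably close to the original argument in [IMT], and it has the minor advantage of not depending on Theorem \ref{extension1}, which appears only later in the paper's logical development; but since Theorem \ref{extension1} is proved independently of the present statement, your route involves no circularity.
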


The lower bounds presented in the preceding section for $f\left(n\right)$ are certainly lower bounds for $\mathrm{str}\left( G\right) +\mathrm{str}\left( \overline{G}\right)$. This together with Theorems \ref{Nordhaus–Gaddum bound} and \ref{upper_bound_strength} provides the following result.

\begin{corollary}
\label{lower_upper_bounds}
If $G$ and $\overline{G}$ are nonempty graphs of order $n$, then
\begin{equation*}
\max\left\{\rho_{n},\rho^{\prime}_{n}\right\} \leq \mathrm{str}\left( G\right) +\mathrm{str}\left( \overline{G}\right) \leq 4n-\sigma_{n}\text{,}
\end{equation*}
where $\rho_{n}=3n+\left\lfloor n/2\right\rfloor -3$ and $\rho^{\prime}_{n}=4n-2\left\lceil \left(3+\sqrt{8n-7}\right)/2\right\rceil +2$. 
\end{corollary}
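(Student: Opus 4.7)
The plan is to assemble the corollary directly from results already established in the paper; no new construction or numbering is required, only a bookkeeping argument that combines the two lower bounds on $f\left(n\right)$ from Section 3 with a Nordhaus--Gaddum-type argument for the upper bound.

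For the upper bound, I would fix any graph $G$ of order $n$ for which both $G$ and $\overline{G}$ are nonempty and apply Theorem \ref{upper_bound_strength} to each of them, obtaining
\begin{equation*}
\mathrm{str}\left(G\right)\le 2n-\beta\left(G\right)\qquad\text{and}\qquad \mathrm{str}\left(\overline{G}\right)\le 2n-\beta\left(\overline{G}\right).
\end{equation*}
Adding these inequalities and invoking the Nordhaus--Gaddum lower bound $\beta\left(G\right)+\beta\left(\overline{G}\right)\ge \sigma_{n}$ provided by Theorem \ref{Nordhaus–Gaddum bound} yields
\begin{equation*}
\mathrm{str}\left(G\right)+\mathrm{str}\left(\overline{G}\right)\le 4n-\bigl(\beta\left(G\right)+\beta\left(\overline{G}\right)\bigr)\le 4n-\sigma_{n},
\end{equation*}
which is the desired right-hand inequality and is uniform in $G$.

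For the left-hand inequality, I would appeal to the two lower bounds on $f\left(n\right)$ already proved in the preceding section. Theorem \ref{lower_bound1} gives $f\left(n\right)\ge \rho_{n}$, while Theorem \ref{lower-bound2} gives $f\left(n\right)\ge \rho'_{n}$, so taking the larger of the two delivers $f\left(n\right)\ge \max\left\{\rho_{n},\rho'_{n}\right\}$. Because $f\left(n\right)$ is by definition the maximum of $\mathrm{str}\left(G\right)+\mathrm{str}\left(\overline{G}\right)$ over all nonempty pairs of order $n$, this is exactly the left-hand inequality in the statement, understood (as the sentence preceding the corollary makes explicit) as a bound achieved by a maximizing graph.

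I do not expect any genuine obstacle: every step is a direct citation of a result already in hand. The only subtle point is interpretive, namely that the upper bound is a per-graph statement while the lower bound must be read relative to an extremal choice of $G$; this asymmetry is forced by the definition of $f\left(n\right)$ and does not require additional work.
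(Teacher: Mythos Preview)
Your proposal is correct and follows essentially the same route as the paper: the upper bound is obtained by applying Theorem~\ref{upper_bound_strength} to $G$ and to $\overline{G}$, summing, and invoking the lower bound of Theorem~\ref{Nordhaus–Gaddum bound}, while the lower bound simply quotes Theorems~\ref{lower_bound1} and~\ref{lower-bound2}. Your explicit remark that the left-hand inequality should be read as a bound on the extremal sum $f(n)$ rather than a per-graph bound is apt and matches the paper's framing in the sentence preceding the corollary.
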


Next, we consider the computational results on the bounds for the sum of the strength of a graph and its complement. The only known Ramsey numbers $r\left(s,t\right) $ for $3 \leq s \leq t \leq 5$ (see \cite{Radziszowsiki}) are 
\begin{equation*}
\begin{tabular}{lll}
$r\left(3,3\right)=6$ & $r\left(3,6\right)=18$ & $r\left(3,9\right)=36$  \\
$r\left(3,4\right)=9$ & $r\left(3,7\right)=23$ & $r\left(4,4\right)=18$  \\
$r\left(3,5\right)=14$ & $r\left(3,8\right)=28$ & $r\left(4,5\right)=25$. 
\end{tabular} 
\end{equation*}
Using this information together with observations so far, we can determine the values of $\sigma_{n}$ and the bounds for $\mathrm{str}\left( G\right) +\mathrm{str}\left( \overline{G}\right)$ given in Corollary \ref{lower_upper_bounds} when $n \in \left[3,35\right]$. Tables 3 and 4 list the values of $\sigma_{n}$ and the values of $\rho_{n}$, $\rho^{\prime}_{n}$ and $4n-\sigma_{n}$ for $n \in \left[3,35\right]$, respectively. Chartrand and Schuster \cite{GS} have computed the values of $\sigma_{n}$ for $n \in \left[1,25\right]$. 

\begin{table}[ht]
\caption{Values of $\sigma_{n}$ for $n \in \left[3,35\right]$ }
\begin{tabular}{rrl}
\\[-0.9em]
\hline
\\[-0.9em]
\multicolumn{1}{c}{$\quad n$} & \multicolumn{1}{c}{$\quad \sigma_{n}$} & \text{Reasons for equality} \\ 
\\[-0.9em]
\hline
\\[-0.9em]
$\quad\left[3,5\right]$ & $4$ & $\quad\quad r\left(3,3\right)=6$\\
$\quad\left[6,8\right]$ & $5$ & $\quad\quad r\left(3,4\right)=9$\\ 
$\quad\left[9,17\right]$ & $6$ & $\quad\quad r\left(4,4\right)=18$\\ 
$\quad\left[18,24\right]$ & $7$ & $\quad\quad r\left(4,5\right)=25$\\ 
$\quad\left[25,27\right]$ & $9$ & $\quad\quad r\left(3,8\right)=28$\\ 
$\quad\left[28,35\right]$ & $10$ & $\quad\quad r\left(3,9\right)=36$\\ 
\\[-0.9em]
\hline

\end{tabular}%
\end{table}

\begin{table}[ht]
\caption{Bounds for $\mathrm{str}\left( G\right) +\mathrm{str}\left( \overline{G}\right)$ }
\begin{center}
\begin{tabular}{rrrr}
\hline
\\[-0.9em]
$n$ & $\quad\rho_{n}$ & $\quad\rho^{\prime}_{n}$ & $\quad4n-\sigma_{n}$ \\ 
\\[-0.9em]
\hline
\\[-0.9em]
$3$ & $\quad\quad 7$ & $\quad\quad 6$& $\quad 8$\\ 
$4$ & $\quad\quad 11$ & $\quad\quad 10$& $\quad 12$\\ 
$5$ & $\quad\quad 14$ & $\quad\quad 12$& $\quad 16$\\ 

$6$ & $\quad\quad 18$ & $\quad\quad 16$ & $\quad 19$\\ 
$7$ & $\quad\quad 21$ & $\quad\quad 20$ & $\quad 23$\\ 
$8$ & $\quad\quad 25$ & $\quad\quad 22$ & $\quad 27$\\ 

$9$ & $\quad\quad 28$ & $\quad\quad 26$ & $\quad 30$ \\ 
$10$ & $\quad\quad 32$ & $\quad\quad 30$& $\quad 34$\\
$11$ & $\quad\quad 35$ & $\quad\quad 34$& $\quad 38$\\ 
$12$ & $\quad\quad 39$ & $\quad\quad 36$& $\quad 42$\\ 
$13$ & $\quad\quad 42$ & $\quad\quad 40$ & $\quad 46$\\ 
$14$ & $\quad\quad 46$ & $\quad\quad 44$ & $\quad 50$\\ 
$15$ & $\quad\quad 49$ & $\quad\quad 48$ & $\quad 54$\\ 
$16$ & $\quad\quad 53$ & $\quad\quad 52$ & $\quad 58$ \\
$17$ & $\quad\quad 56$ & $\quad\quad 54$ & $\quad 62$ \\

$18$ & $\quad\quad 60$ & $\quad\quad 58$& $\quad 65$\\
$19$ & $\quad\quad 63$ & $\quad\quad 62$& $\quad 69$\\ 
$20$ & $\quad\quad 67$ & $\quad\quad 66$& $\quad 73$\\ 
$21$ & $\quad\quad 70$ & $\quad\quad 70$ & $\quad 77$\\ 
$22$ & $\quad\quad 74$ & $\quad\quad 74$ & $\quad 81$\\ 
$23$ & $\quad\quad 77$ & $\quad\quad 76$ & $\quad 85$\\ 
$24$ & $\quad\quad 81$ & $\quad\quad 80$ & $\quad 89$ \\ 

$25$ & $\quad\quad 84$ & $\quad\quad 84$& $\quad 91$\\
$26$ & $\quad\quad 88$ & $\quad\quad 88$& $\quad 95$\\ 
$27$ & $\quad\quad 91$ & $\quad\quad 92$& $\quad 99$\\ 

$28$ & $\quad\quad 95$ & $\quad\quad 96$& $\quad 102$\\
$29$ & $\quad\quad 98$ & $\quad\quad 100$& $\quad 106$\\ 
$30$ & $\quad\quad 102$ & $\quad\quad 102$& $\quad 110$\\ 
$31$ & $\quad\quad 105$ & $\quad\quad 106$ & $\quad 114$\\ 
$32$ & $\quad\quad 109$ & $\quad\quad 110$ & $\quad 118$\\ 
$33$ & $\quad\quad 112$ & $\quad\quad 114$ & $\quad 122$\\ 
$34$ & $\quad\quad 116$ & $\quad\quad 118$ & $\quad 126$ \\
$35$ & $\quad\quad 119$ & $\quad\quad 122$ & $\quad 130$ \\
\\[-0.9em]
\hline
\end{tabular}
\end{center}
\end{table}

As mentioned above, the known Ramsey numbers $r\left(s,t\right)$ are very limited. Through this, the preceding result is not ideal in the sense that it depends on $r\left(s,t\right)$. Therefore, efficiently computable upper bounds are necessary to establish. This motivates us to propose the next problem.

\begin{problem}
Find an efficiently computable upper bound for $\mathrm{str}\left( G\right) +\mathrm{str}\left( \overline{G}\right)$ when $G$ and $\overline{G}$ are nonempty graphs of order $n$.
\end{problem}

\section{Conclusions}
In this paper, we have established a lower bound for $r\left(F_{s}, F_{t}\right)$ (see Theorem \ref{ramsey_lower}).  We also have provided the exact values of $r\left(F_{s},F_{t}\right)$ for $s \in \left[2,4\right]$ (see Proposition \ref{r(F2,Ft)}, Corollary \ref{r(F3,Ft)}, and Theorem \ref{r(F4,Ft)}). From these results, we know that the bound given in Theorem \ref{ramsey_lower} is exact when $s=3$ and $t \geq 2$ and have rediscovered the known Ramsey numbers $r\left(F_{s}, F_{t}\right)$ for $2 \leq s \leq t \leq 4$ (see Table 1). In addition to these, we have presented two lower bounds for $f\left(n\right)$ (see Theorems \ref{lower_bound1} and \ref{lower-bound2}) and have determined the exact values of $f\left(n\right)$ for $n \in \left[3,12 \right]$ (see Table 2). Furthermore, we have extended the known necessary and sufficient conditions involving the strength of a graph (see Theorems \ref{extension1}, \ref{extension2}, and \ref{extension3}) and have found lower and upper bounds for $\mathrm{str}\left( G\right) +\mathrm{str}\left( \overline{G}\right)$ (see Corollary \ref{lower_upper_bounds}). Throughout this paper, we have proposed some open problems arising from our study. Finally, with this paper, the authors hope that interest in the strength of graphs will be aroused among those who study the theory of Ramsey numbers or graph labelings.

\subsubsection*{$\emph{Acknowledgment}$}
The authors dedicate this paper to Susana-Clara L\'{o}pez Maship. 
Her inspiration and dedication have brought new avenues into graph labeling and other related topics.

\end{document}